\documentclass[11pt]{article}%
\usepackage{amssymb}
\usepackage{amsmath}
\usepackage[spanish]{babel}
\usepackage{amsfonts}
\usepackage{graphicx}%
\setcounter{MaxMatrixCols}{30}
\newtheorem{theorem}{Teorema}

\newtheorem{definition}[theorem]{Definici\'{o}n}
\newtheorem{example}[theorem]{Ejemplo}
\newtheorem{lemma}[theorem]{Lema}
\newtheorem{proposition}[theorem]{Proposici\'{o}n}
\newenvironment{proof}
{\trivlist
\parindent=0cm \leftskip=1em \rightskip=1em
\hangindent=0em \small
\item[\hskip.2em Demostraci\'{o}n.--]
\hskip2pt}
{\hfill QED. \endtrivlist}
\begin{document}

\title{L\'{\i}mites directos de prolongaciones \\de algebroides de Lie}
\author{Patrick CABAU\\{\small {Lyc\'{e}e Pierre de Fermat, BP 7013, 31068 Toulouse cedex, France}}\\{\small {e-mail: Patrick.Cabau@ac-toulouse.fr} }}
\maketitle

{\small \textbf{Abstract.--} We prove that direct limits of
Lie algebroids and their prolongations can be endowed with structures of
convenient spaces.}

{\small \textbf{Resumen.--} Probamos que se puede definir estructuras de
espacios convenientes} {\small sobre l\'{\i}mites directos de algebroides de
Lie y sus prolongaciones.}

{\small \textbf{R\'{e}sum\'{e}.--} On \'{e}tablit que l'on peut munir les
limites directes d'alg\'{e}bro\"{\i}des de Lie et de leurs prolongements de
structures d'espaces }$c^{\infty}-${\small complets.}

\bigskip{\small \textbf{Key words:} Direct limit; Lie algebroid; prolongation
of Lie algebroid; convenient calculus}

{\small \textbf{Palabras claves:} L\'{\i}mite directo; algebroide de Lie;
prolongaci\'{o}n de algebroide de Lie; c\'{a}lculo diferencial conveniente}

{\small \textbf{Mots-clefs :} Limite directe\,; alg\'{e}bro\"{\i}de de Lie\,;
prolongement d'alg\'{e}bro\"{ide} de Lie\,; calcul diff\'{e}rentiel au sens de
Kriegel et Michor}

\bigskip{\small \textbf{2000 Mathematics Subject Classification:} 46A13,
46T05, 58A32}

\section{Introduction}

Los algebroides de Lie definidos por J. Pradines en \cite{Pra} constituyen una
generalizaci\'{o}n natural de las \'{a}lgebras de Lie y de los fibrados
tangentes a una variedad.

En los \'{u}ltimos a\~{n}os, esta noci\'{o}n se ha revelado fecunda en
Mec\'{a}nica, Geometr\'{\i}a simpl\'{e}ctica y teor\'{\i}a del Control
\'{o}ptimo. En \cite{Wei}, Weinstein plantea el problema de un posible
desarrollo de un formalismo geom\'{e}trico sobre algebroides de Lie similar al
formalismo de Klein en Mec\'{a}nica Lagrangiana. En \cite{Mar1}, Mart\'{\i}nez
da una respuesta positiva a este problema usando la noci\'{o}n de
prolongaci\'{o}n de un algebroide de Lie introducida por Higgins y Mackenzie
en \cite{HigMac}.

En este art\'{\i}culo nos interesamos a los l\'{\i}mites directos de tales
estructuras y obtenemos los resultados dados en los teoremas \ref{T_ConvenienLieAlgebroid} y
\ref{T_ConvenientLieAlgebroidExtension} de la \'{u}ltima secci\'{o}n: se puede
definir sobre estos l\'{\i}mites una estructura conveniente.

En la secci\'{o}n \ref{_CalculoDiferencialConvenient} recordamos el formalismo
conveniente desarrollado en \cite{KriMic}. Desarrollamos la noci\'{o}n de
l\'{\i}mite directo de diferentes estructuras (espacios vectoriales
topologicos, variedades, fibrados vectoriales) en las secciones
\ref{_LimitesDirectosEVT} y \ref{_LimitesDirectosVariedades}. Entonces, obtenemos estructuras convenientes sobre estos l\'{\i}mites (Proposici\'{o}n
\ref{T_DirectLimitSequenceFiniteDimensionalParacompactManifolds} y
Proposici\'{o}n \ref{P_StructureOnDirectLimitVectorBundles}). En la
secci\'{o}n \ref{_AlgebroidesLie} recordamos las nociones de algebroides de Lie y sus prolongaciones. En la \'{u}ltima secci\'{o}n probamos que se puede definir sobre l\'{\i}mites directos de tales objetos estructuras convenientes y damos el ejemplo del oscilador arm\'{o}nico conveniente.

\section{\label{_CalculoDiferencialConvenient}C\'{a}lculo diferencial
conveniente}

Con el fin de equipar un espacio vectorial topol\'{o}gico de Haussdorf
localmente convexo (e.v.t.l.c.) $E$ con una estructura diferencial, como introducida por Fr\"{o}licher, Kriegl y Michor, se utiliza la noci\'{o}n de
curva diferenciable $c:\mathbb{R}\rightarrow E$ de clase $C^{\infty}$ que no
plantea ning\'{u}n problema.

La propiedad clave es la de $c^{\infty}$-completitud.

\begin{definition}
\label{D_ConvenientVectorSpace} Se dice que un e.v.t.l.c. $E$ es $c^{\infty}$-completo o \textit{conveniente} si se cumple la propiedad siguiente:
Dado $B\subset E$ un conjunto cerrado, acotado y absolutamente convexo, el espacio lineal $E_{B}$ generado por $B$ es un espacio de Banach.
\end{definition}

Un e.v.t.l.c. conveniente es Mackey completo (cf. \cite{KriMic} Teorema 2.14).

La $c^{\infty}$-topolog\'{\i}a de un e.v.t.l.c. es la topolog\'{\i}a final
inducida por la familia de las curvas $C^{\infty}$ $\mathbb{R}\rightarrow E$;
se denotar\'{a} por $c^{\infty}E$. A sus conjuntos abiertos los llamaremos
$c^{\infty}$-abiertos.

N\'{o}tese que la $c^{\infty}$-topolog\'{\i}a es en general m\'{a}s fina que
la topolog\'{\i}a original. En los espacios de Fr\'{e}chet, esta
topolog\'{\i}a coincide con la topolog\'{\i}a del e.v.t.l.c.

En general, $c^{\infty}E$ no es un espacio vectorial localmente convexo.

Sean $E$ y $F$ dos espacios convenientes y sea $U\subset E$ un $c^{\infty}%
$-abierto. Se dice que una aplicaci\'{o}n $f:E\supset U\rightarrow F$ es
$c^{\infty}$ si $f\circ c\in C^{\infty}\left(  \mathbb{R},F\right)  $ para
cada $c\in C^{\infty}\left(  \mathbb{R},U\right)$.

Adem\'{a}s, se puede definir una estructura de espacio vectorial $c^{\infty}%
$-completo sobre el espacio $C^{\infty}\left(  U,F\right)$ (cf.
\cite{KriMic}, 2.3 (5)).

\begin{proposition}
Los l\'{\i}mites, las sumas directas y los l\'{\i}mites directos estrictos de
espacios convenientes son espacios convenientes.
\end{proposition}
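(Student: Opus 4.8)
The plan is to prove each of the three closure properties separately, working from the definition of convenient space as $c^\infty$-completeness. The key technical fact underlying everything is that $c^\infty$-completeness is equivalent to the Mackey-completeness condition: every Mackey-Cauchy sequence converges, or equivalently every smooth curve has a primitive. I would invoke this characterization (from \cite{KriMic}) because the bounded-absolutely-convex-closed formulation in Definition \ref{D_ConvenientVectorSpace} is awkward to manipulate under categorical constructions, whereas Mackey-completeness transports cleanly.

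First I would treat projective limits (the \emph{l\'{\i}mites}). Given a projective system $(E_i)$ of convenient spaces with the limit $E = \varprojlim E_i$ carrying the initial locally convex topology induced by the projections $\pi_i : E \to E_i$, I would take a bounded absolutely convex closed set $B \subset E$ and check that $E_B$ is Banach. The natural route is to show $E$ is Mackey-complete: a Mackey-Cauchy sequence in $E$ maps under each $\pi_i$ to a Mackey-Cauchy sequence in $E_i$, which converges by hypothesis; the limits are compatible with the bonding maps by continuity, hence define an element of $E$ to which the original sequence Mackey-converges. Closed subspaces of convenient spaces are convenient by the same argument, and arbitrary products are the special case with trivial bonding maps, so projective limits follow.

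Next I would treat direct sums (\emph{sumas directas}). For $E = \bigoplus_i E_i$ the essential point is that a bounded set $B$ in a locally convex direct sum is contained in a sum of finitely many of the summands together with a bounded set in each; this is the standard fact that bounded subsets of $\bigoplus_i E_i$ are bounded subsets of some finite subsum $\bigoplus_{i \in J} E_i$. Since a finite direct sum of convenient spaces is convenient (finite products are convenient, handled above), the span $E_B$ lives inside a convenient finite subsum and one reduces the Banach-disc condition to that finite case. For strict direct limits (\emph{l\'{\i}mites directos estrictos}) $E = \varinjlim E_i$ with closed continuous injections $E_i \hookrightarrow E_{i+1}$, I would use the analogous regularity property: in a strict inductive limit every bounded set is contained in and bounded in some step $E_i$. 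Then a bounded absolutely convex closed $B \subset E$ sits inside a single $E_{i_0}$, where it is a Banach disc by hypothesis, and one checks the span computed in $E$ agrees with the span in $E_{i_0}$.

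The step I expect to be the main obstacle is the direct-limit case, specifically justifying that a bounded (and closed) absolutely convex set in the strict inductive limit is actually captured in a single step and that its Banach-disc property is inherited from that step rather than being disturbed by the ambient topology. This requires the strictness and closedness hypotheses in an essential way: without strictness the inductive limit topology on each $E_i$ need not coincide with its original topology, and the regularity of bounded sets can fail. I would therefore be careful to cite the precise regularity theorem for strict inductive limits and verify that the induced topology on $E_{i_0}$ from $E$ is the original one, so that closedness and boundedness of $B$ in $E$ yield the same in $E_{i_0}$, allowing the convenient structure of $E_{i_0}$ to do the work.
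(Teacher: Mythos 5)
The paper itself gives no proof of this proposition: it is recalled from \cite{KriMic} (where it appears as part of the stability theorem for $c^{\infty}$-complete spaces, Theorem 2.14/2.15), so there is no in-paper argument to compare yours against. Your reconstruction is correct and is essentially the standard Kriegl--Michor argument: pass to the Mackey-completeness characterization for limits (projective limits sit as closed subspaces of products, and continuous linear maps preserve Mackey--Cauchy sequences), and use regularity of bounded sets (containment in a finite subsum, resp.\ in a single step $E_{i_0}$) for direct sums and strict inductive limits, where the Banach-disc property of $E_B$ is intrinsic to $B$ and its span. Your flag on the direct-limit case is the right one: the Dieudonn\'e--Schwartz regularity theorem needs each step to be closed in the next, which is exactly the hypothesis under which \cite{KriMic} states the result (``strict inductive limits of sequences of closed embeddings''), and is slightly stronger than the bare notion of \emph{encaje} used in Section \ref{_LimitesDirectosEVT}; in the finite-dimensional setting of the rest of the paper this closedness is automatic, so nothing is lost.
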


Las nociones de variedad conveniente (v\'{e}ase \cite{KriMic}, 27.) y de
fibrado vectorial conveniente (v\'{e}ase \cite{KriMic}, 29.) se definen de
manera natural.

\section{\label{_LimitesDirectosEVT}L\'{\i}mites directos de espacios
vectoriales topol\'{o}gicos}

\bigskip En esta secci\'{o}n vamos a referirnos a \cite{Bou}, \cite{Glo1} y
\cite{Glo2}.

Sea $\left(  I,\leq\right)  $ un conjunto direccionado. Un \textit{sistema
directo} en una categor\'{\i}a $\mathbb{A}$ es un par $\mathcal{S}=\left(
X_{i},\varepsilon_{i}^{j}\right)  _{i\in I,\ j\in I,\ i\leq j}$ donde $X_{i}$
es un objeto de la categor\'{\i}a y los $\varepsilon_{i}^{j}:X_{i}\longrightarrow
X_{j}$ son homomorfismos (\textit{bonding maps}) que cumplen las propiedades siguientes:

\begin{enumerate}
\item $\forall i \in I,\varepsilon_{i}^{i}=\operatorname{Id}_{X_{i}}$

\item $\forall\left(  i,j,k\right)  \in I^{3},i\leq j\leq k,\ \varepsilon
_{j}^{k}\circ\varepsilon_{i}^{j}=\varepsilon_{i}^{k}$.
\end{enumerate}

Un \textit{cono} sobre $\mathcal{S}$ es un par $\left(  X,\varepsilon
_{i}\right)  _{i\in I}$ donde $X\in\operatorname*{ob}\mathbb{A}$,
$\varepsilon_{i}:X_{i}\longrightarrow X$ y los homomorfismos $\varepsilon_{i}^{j}:X_{i}\longrightarrow
X_{j}$ verifican las relaciones
$\varepsilon_{j}\circ\varepsilon_{i}^{j}=\varepsilon_{i}$ para $i\leq j$.

Un cono $\left(  X,\varepsilon_{i}\right)  _{i\in I}$ es un l\'{\i}mite
directo de $\mathcal{S}$ si para cualquier cono $\left(  Y,\theta_{i}\right)
_{i\in I}$ sobre $\mathcal{S}$ existe un \'{u}nico homomorfismo $\psi
:X\longrightarrow Y$ tal que $\psi\circ\varepsilon_{i}=\theta_{i}$. En tal
caso, escribiremos $X=\underrightarrow{\lim}\mathcal{S}$ o $X=\underrightarrow
{\lim}X_{i}$.

Cuando $I=\mathbb{N}$ dotado de la relaci\'{o}n de orden en los n\'{u}meros
naturales, los sistemas directos numerables se llaman \textit{sucesiones
directas}.

\subsection{L\'{\i}mites directos de conjuntos}

Sea $\mathcal{S}=\left(  X_{i},\varepsilon_{i}^{j}\right)  _{i\in I,\ j\in
I,\ i\leq j}$ un sistema directo de conjuntos ($\mathbb{A}=\mathbb{SET}$).

Sea $\mathcal{U}=\coprod\limits_{i\in I}X_{i}=\left\{  \left(  x,i\right)
:x\in X_{i}\right\}  $ la uni\'{o}n disjunta de los conjuntos $X_{i}$ con la
inclusi\'{o}n can\'{o}nica $%
\begin{array}
[c]{cccc}%
\iota_{i}: & X_{i} & \longrightarrow & \mathcal{U}\\
& x & \mapsto & \left(  x,i\right).
\end{array}$.
Se define una relaci\'{o}n de equivalencia $\sim$ sobre $\mathcal{U}$ de la
manera siguiente:\\
 $\iota_{i}\left(  x\right)  \sim\iota_{j}\left(  y\right)  $
si existe $k\in I$ : $i\leq k$ y $j\leq k$ tales que $\varepsilon_{i}%
^{k}\left(  x\right)  =\varepsilon_{j}^{k}\left(  y\right)$.\\
Tenemos el conjunto cociente $X=\mathcal{U}/\sim$ y la aplicaci\'{o}n $\varepsilon
_{i}:\pi\circ\iota_{i}$ donde $\pi:\mathcal{U}\longrightarrow\mathcal{U}/\sim$
es la aplicaci\'{o}n can\'{o}nica suprayectiva.

Entonces, $\left(  X,\varepsilon_{i}\right)  $ es el l\'{\i}mite directo
$\mathcal{S}$ en la categor\'{\i}a $\mathbb{SET}$.

Si cada $\varepsilon_{i}^{j}$ es inyectiva entonces $\varepsilon_{i}$ es
inyectiva. Por supuesto $S$ es equivalente al sistema directo de los
subconjuntos $\varepsilon_{i}\left(  X_{i}\right)  \subset X$, con las
inclusiones can\'{o}nicas.

\subsection{L\'{\i}mites directos de espacios top\'{o}logicos}

Si $\mathcal{S}=\left(  X_{i},\varepsilon_{i}^{j}\right)  _{i\in I,\ j\in
I,\ i\leq j}$ es un sistema directo de espacios topol\'{o}gicos $X_{i}$ donde
$\varepsilon_{i}^{j}$ son aplicaciones continuas, entonces el l\'{\i}mite
directo $\left(  X,\varepsilon_{i}\right)  _{i\in I}$ en la categor\'{\i}a de conjuntos coincide con el l\'{\i}mite directo en la categor\'{\i}a $\mathbb{TOP}$ de los espacios
topol\'{o}gicos si $X$ tiene la \textit{DL-topolog\'{\i}a}, i.e. la topolog\'{\i}a
m\'{a}s fina para la cual todas las aplicaciones $\varepsilon_{i}$ son
continuas. Entonces $O\subset X$ es abierto si y s\'{o}lo si $\varepsilon
_{i}^{-1}\left(  O\right)  $ es abierto en $X_{i}$ para cada $i\in I$.

$\mathcal{S}$ es \textit{estricto} si cada $\varepsilon_{i}^{j}$ es un encaje.
En esta situaci\'{o}n, cada $\varepsilon_{i}$ es un encaje.

Demos ahora algunas propiedades de sucesiones crecientes de espacios
topol\'{o}gicos (\cite{Glo2}, Lema 1.7):

\begin{proposition}
\label{P_TopologicalPropertiesAscendingSequenceTopologicalSpaces}Sea
$X_{1}\subset X_{2}\subset\cdots$ una sucesi\'{o}n creciente de espacios
topol\'{o}gicos tales que las inclusiones son continuas. Ponemos en
$X=\bigcup\limits_{n\in\mathbb{N}^{\ast}}X_{n}$ la topolog\'{\i}a final
correspondiente a la familia de las inclusiones $\varepsilon_{n}%
:X_{n}\hookrightarrow X$ (i.e. la DL-topolog\'{\i}a). Entonces, tenemos las
propiedades siguientes:
\begin{enumerate}
  \item Si cada $X_{n}$ es $T_{1}$, entonces $X$ es $T_{1}$.
  \item Si $O_{n}\subset X_{n}$ es abierto y $O_{1}\subset O_{2}\subset\cdots$, entonces $O=\bigcup\limits_{n\in\mathbb{N}^{\ast}}O_{n}$ es un abierto de $X$
y la DL-topolog\'{\i}a sobre $O=\underrightarrow{\lim}O_{n}$ coincide con la
topolog\'{\i}a inducida por $X$.
  \item Si cada $X_{n}$ es localmente compacto,
entonces $X$ es Haussdorf.
  \item Si cada $X_{n}$ es $T_{1}$ y $K\subset X$ es
compacto, entonces $K\subset X_{n}$ para cierto $n$.
\end{enumerate}
\end{proposition}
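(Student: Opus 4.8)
The plan is to prove each of the four statements about the ascending sequence $X_1 \subset X_2 \subset \cdots$ with the final (DL-)topology. The guiding principle throughout is the defining property of the final topology: a set $O \subset X$ is open if and only if $O \cap X_n = \varepsilon_n^{-1}(O)$ is open in $X_n$ for every $n$; dually, a set is closed iff its trace on each $X_n$ is closed in $X_n$. I would keep this characterization at the center of every argument.

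For part 1, to show $X$ is $T_1$ it suffices to show every singleton $\{x\}$ is closed. Given $x \in X$, it lies in some $X_m$, hence in every $X_n$ with $n \geq m$. The plan is to check that $\{x\} \cap X_n$ is closed in $X_n$ for all $n$: for $n \geq m$ this is $\{x\}$, closed since $X_n$ is $T_1$; for $n < m$ the trace is either empty or $\{x\}$ if $x$ happens to already lie in $X_n$, and in the latter case it is closed again by the $T_1$ property of $X_n$. Thus $\{x\}$ is closed and $X$ is $T_1$.

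For part 2, given the increasing chain of opens $O_n \subset X_n$ with $O = \bigcup_n O_n$, I first show $O$ is open in $X$ by verifying $O \cap X_n$ is open in $X_n$. The key observation is that, because the $O_k$ increase and the inclusions are continuous, $O \cap X_n = \bigcup_{k \geq n} (O_k \cap X_n)$, and each $O_k \cap X_n = \varepsilon_n^{-1}(O_k) \cap \cdots$ is open in $X_n$ by continuity of the inclusion $X_n \hookrightarrow X_k$; hence the union is open. To identify the DL-topology on $O = \underrightarrow{\lim} O_n$ with the subspace topology from $X$, I would show the two topologies have the same open sets. A subspace-open set is of the form $U \cap O$ with $U$ open in $X$; one checks its trace on each $O_n$ is open, giving DL-openness. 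Conversely, a DL-open $V \subset O$ satisfies $V \cap O_n$ open in $O_n$ for all $n$; the task is to produce an open $U \subset X$ with $U \cap O = V$, and here I would use that $O$ itself is open in $X$ so that open subsets of $O$ (in the subspace sense) coincide with open subsets of $X$ contained in $O$ — this reduces the converse to checking $V$ is open in $X$ directly via its traces $V \cap X_n$.

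For part 4, the compactness statement, the plan is by contradiction: suppose $K \subset X$ is compact but $K \not\subset X_n$ for any $n$. Then I can choose points $x_n \in K \setminus X_n$, and after passing to a subsequence arrange that the $x_n$ are distinct and that $x_n \in X_{m(n)} \setminus X_{m(n)-1}$ for a strictly increasing sequence $m(n)$. The set $D = \{x_n : n\}$ is then an infinite discrete closed subset: its trace on each $X_k$ is finite, hence closed by the $T_1$ hypothesis, so $D$ is closed in $X$, and the same argument applied to any subset of $D$ shows every subset of $D$ is closed, so $D$ is discrete. An infinite discrete closed subset of a compact set is impossible, giving the contradiction. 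Part 3 (Hausdorffness under local compactness) is the step I expect to be the main obstacle, since local compactness of each $X_n$ does not pass transparently to the limit and separating two points $x, y \in X$ by disjoint DL-opens requires building, level by level, an increasing pair of disjoint open neighborhoods and invoking part 2 to conclude they are open in $X$; the inductive construction using relatively compact neighborhoods whose closures stay disjoint is where the real work lies, and I would lean on local compactness precisely to control the closures at each stage. For all four parts the cited source (\cite{Glo2}, Lemma 1.7) provides the template, so I would follow its inductive scheme closely.
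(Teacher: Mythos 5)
The paper offers no proof of this proposition at all: it is quoted verbatim from Gl\"ockner (\cite{Glo2}, Lemma 1.7) as a known result, so there is no in-paper argument to compare yours against. Judged on its own merits, your treatment of parts 1, 2 and 4 is correct and is the standard one. In part 1 the trace of a singleton on each $X_n$ is empty or a point, hence closed. In part 2 the identity $O\cap X_n=\bigcup_{k\ge n}(O_k\cap X_n)$ together with continuity of the inclusions $X_n\hookrightarrow X_k$ gives openness of $O$, and your reduction of the converse direction (DL-open implies subspace-open) to the fact that an open subset of an open subspace is open in the ambient space is sound; the one step you leave implicit, that $V\cap X_n=\bigcup_k (V\cap O_k)\cap X_n$ is open in $X_n$ because each $V\cap O_k$ is open in $X_k$, is routine. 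In part 4 the observation that $x_n\notin X_n$ forces $D\cap X_k\subset\{x_1,\dots,x_{k-1}\}$, so that $D$ and all its subsets are closed, correctly yields an infinite closed discrete subset of a compact set, a contradiction.

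The genuine gap is part 3, which you yourself flag as the main obstacle and then do not carry out. The plan you name is the right one, but it is precisely where the content of the lemma lives: starting from $x\neq y$ in some $X_{n_0}$, one must inductively produce open sets $U_n\subset U_{n+1}$ and $V_n\subset V_{n+1}$ in $X_n$ with compact closures satisfying $\overline{U_{n+1}}\cap\overline{V_{n+1}}=\emptyset$, using that in a locally compact Hausdorff space two disjoint compact sets admit disjoint relatively compact open neighborhoods; then $U=\bigcup_n U_n$ and $V=\bigcup_n V_n$ are open by part 2 and disjoint because the chains are nested. Two points need to be made explicit for this to go through: first, ``locally compact'' must be read as including Hausdorff (as in Gl\"ockner's conventions), otherwise the separation step at each level fails; second, the images of the compact closures under the continuous inclusions $X_n\hookrightarrow X_{n+1}$ remain compact, which is what allows the induction to restart at the next level. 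Without writing out this induction your proposal proves only three of the four assertions.
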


\subsection{L\'{\i}mites directos de espacios vectoriales topol\'{o}gicos de
dimension finita}

Sea $E$ un espacio vectorial real de dimensi\'{o}n numerable.

Sea $E_{1}\subset E_{2}\subset\cdots$ una sucesi\'{o}n creciente de
subespacios vectoriales de $E$ de dimensi\'{o}n finita tal que $E=\bigcup
\limits_{n\in\mathbb{N}^{\ast}}E_{n}$. Se obtiene una sucesi\'{o}n directa
estricta de subespacios vectoriales dotada de la topolog\'{\i}a\textit{ }final
por las inclusiones $E_{n}\hookrightarrow E$. \newline
Entonces $O\subset X$ es abierto si y s\'{o}lo si $\varepsilon_{n}^{-1}\left(  O\right)  $ es un
abierto de $X_{n}$ para cada $n\in\mathbb{N}$.

Puesto que los espacios son de dimensiones finitas esta topolog\'{\i}a
coincide con la m\'{a}s fina topolog\'{\i}a de espacio vectorial localmente
convexo (\cite{Glo1}, Ejemplo 3.5); el conjunto de todos los subconjuntos
equilibrados, absorbentes y convexos es una base para esta topolog\'{\i}a.
\newline Adem\'{a}s $E$ es un espacio vectorial conveniente (\cite{Glo1},
Lema 6.1).

Tenemos la relaci\'{o}n que sigue entre la diferenciabilidad $C^{\infty}$ y la
$c^{\infty}$ (\cite{Glo2}, Lema 1.9)

\begin{lemma}
Para una aplicaci\'{o}n $f:O\longrightarrow F$ donde $O=\bigcup\limits_{n\in
\mathbb{N}^{\ast}}O_{n}$ es una sucesi\'{o}n creciente de conjuntos abiertos
($O_{n}\subset E_{n}$) y $F$ un e.v.t.l.c. real y Mackey completo tenemos la
equivalencia siguiente: $f$ es $C^{\infty}$ si y s\'{o}lo si $f$ es
$c^{\infty}$.
\end{lemma}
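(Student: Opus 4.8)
The plan is to prove the two implications separately, understanding $f$ being $C^\infty$ in the Michal--Bastiani (Keller $C^\infty_c$) sense: all iterated directional derivatives $d^kf\colon O\times E^k\to F$ exist and are continuous for the direct-limit topology. The easy implication is $C^\infty\Rightarrow c^\infty$. If $f$ is Michal--Bastiani smooth and $c\in C^\infty(\mathbb{R},O)$, then the chain rule shows that $f\circ c$ is again a smooth curve into $F$; this holds for any locally convex $F$ and uses nothing about the direct-limit structure, so it requires no further work.

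For the converse $c^\infty\Rightarrow C^\infty$ the decisive structural fact is part 4 of Proposition \ref{P_TopologicalPropertiesAscendingSequenceTopologicalSpaces}: every compact subset of $E=\bigcup_{n}E_n$ is contained in some step $E_n$. First I would observe that each inclusion $\varepsilon_n\colon E_n\hookrightarrow E$ is linear and continuous, hence $c^\infty$; therefore, if $f$ is $c^\infty$, every restriction $f|_{O_n}=f\circ\varepsilon_n$ is $c^\infty$ as a map on the finite-dimensional open set $O_n$. Since $F$ is Mackey complete (i.e. convenient), the theory of \cite{KriMic} gives that on a finite-dimensional domain $c^\infty$-smoothness coincides with ordinary $C^\infty$-smoothness into $F$; this is precisely the step where Mackey completeness of $F$ is indispensable. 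Consequently each $f|_{O_n}$ is genuinely $C^\infty$, and these restrictions are mutually compatible because they all arise from the single map $f$.

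It remains to assemble the compatible family $(f|_{O_n})_n$ into one globally $C^\infty$ map, which is the technical heart of the matter. I would compute the iterated derivatives directly: given $x\in O$ and $v_1,\dots,v_k\in E$, all of the vectors $x,v_1,\dots,v_k$ lie in a common step $E_N$ (take the largest of their indices), and for small parameter values the relevant segment stays inside the open set $O_N\subset E_N$; hence $d^kf(x)(v_1,\dots,v_k)$ is computed entirely within $E_N$ using the already-established smoothness of $f|_{O_N}$, so it exists and is independent of the chosen step.

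The main obstacle is the continuity of the maps $d^kf\colon O\times E^k\to F$ for the direct-limit topology, rather than merely on each step. Here I would use that, for a \emph{countable} ascending sequence of \emph{finite-dimensional} spaces, the product $O\times E^k$ carries the direct-limit topology of the products $O_N\times E_N^{k}$, so products commute with the direct limit in this setting. Continuity on the direct limit then follows, by the universal property of the final topology, from continuity of each $d^kf|_{O_N\times E_N^{k}}$, which is exactly the $C^\infty$-smoothness of $f|_{O_N}$ obtained above. This establishes Michal--Bastiani smoothness of $f$ and closes the equivalence; I expect the verification that products interchange with the direct limit to be the only genuinely delicate point, the rest being the cited finite-dimensional comparison theorem together with the compactness property of Proposition \ref{P_TopologicalPropertiesAscendingSequenceTopologicalSpaces}.
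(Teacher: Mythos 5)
Your argument is correct and is essentially the proof this lemma actually has: the paper itself gives no demonstration, quoting the statement directly from \cite{Glo2} (Lemma 1.9), and Gl\"ockner's proof proceeds exactly as you do --- the finite-dimensional comparison of convenient and Michal--Bastiani smoothness into a Mackey complete target on each step $O_n$, followed by the observation that finite products commute with countable direct limits of locally compact spaces (so that continuity of $d^kf$ on $O\times E^k$ reduces to continuity on each $O_N\times E_N^k$), together with part 4 of Proposition \ref{P_TopologicalPropertiesAscendingSequenceTopologicalSpaces}. The two ingredients you single out as delicate (Mackey completeness for the step-wise comparison, and the interchange of products with the direct limit, valid here because the steps are locally compact and the system is countable) are precisely the ones the cited proof relies on, so there is nothing to add.
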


\begin{example}
\label{Ex_Rinfinity}El espacio vectorial $\mathbb{R}^{\infty},$ tambi\'{e}n
denotado por $\mathbb{R}^{\left(  \mathbb{N}\right)  }$, de todas las
sucesiones finitas es el l\'{\i}mite directo estricto de $\left(
\mathbb{R}^{i},\varepsilon_{i}^{j}\right)  _{\left(  i,j\right)  \in
\mathbb{N}^{2},\ i\leq j}$ donde $\varepsilon_{i}^{j}:\left(  x_{1}%
,\dots,x_{i}\right)  \mapsto\left(  x_{1},\dots,x_{i},0,\dots,0\right)
.$\newline Es un espacio vectorial numerable y conveniente (\cite{KriMic},
47.1). Una base de $\mathbb{R}^{\infty}$ es $\left(  e_{i}\right)
_{i\in\mathbb{N}^{\ast}}$ donde $e_{i}=\left(  0,\dots,0,\underset
{i^{th}\ \text{term}}{1},0,\dots\right)  \in\varepsilon_{i}\left(
\mathbb{R}^{i}\right)  $.
\end{example}

\section{\label{_LimitesDirectosVariedades}L\'{\i}mites directos de
variedades}

\subsection{L\'{\i}mites directos de sucesiones crecientes de
variedades de dimensi\'{o}n finita}

Combinando los resultados obtenidos por Gl\"{o}ckner (\cite{Glo2}, Teorema 1 y
Proposici\'{o}n 3.6), obtenemos:

\begin{theorem}
\label{T_DirectLimitSequenceFiniteDimensionalParacompactManifolds}Sea
$\mathcal{M}=\left(  M_{i},\varepsilon_{i}^{j}\right)  _{i\in\mathbb{N}^{\ast
},\ j\in\mathbb{N}^{\ast},\ i\leq j}$ una sucesi\'{o}n directa de variedades
de clase $C^{\infty}$ paracompactas de dimensi\'{o}n finita donde
$\varepsilon_{i}^{j}:M_{i}\longrightarrow M_{j}$ son inmersiones inyectivas de
clase $C^{\infty}$. Sea $s=\underset{i\in\mathbb{N}^{\ast}}{\sup}\left(  \dim
M_{i}\right)  \in\mathbb{N}\cup\left\{  +\infty\right\}$. Hay una \'{u}nica
estructura de variedad $c^{\infty}$ modelada sobre $\mathbb{R}^{\infty}$ por
la cual $\varepsilon_{n}:M_{n}\longrightarrow M=\bigcup\limits_{i\in
\mathbb{N}^{\ast}}M_{i}$ es una aplicaci\'{o}n de
clase $c^{\infty}$ para cada $n\in\mathbb{N}^{\ast}$ y tal que $\left(
M,\varepsilon_{n}\right)  _{n\in\mathbb{N}^{\ast}}=\underrightarrow{\lim
}\mathcal{S}$ en la categor\'{\i}a de las variedades convenientes.
\end{theorem}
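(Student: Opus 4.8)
The plan is to produce the convenient structure in three stages: fix the underlying set and topology as the direct limit already constructed, build an atlas modelled on $\mathbb{R}^\infty$, and then verify the universal property together with uniqueness. Since every $\varepsilon_i^j$ is an injective immersion, the set-theoretic direct limit identifies $M$ with the increasing union $\bigcup_{n}\varepsilon_n(M_n)$ and turns the bonding maps into inclusions after the usual identifications; I would endow $M$ with the DL-topology, so that Proposition \ref{P_TopologicalPropertiesAscendingSequenceTopologicalSpaces} applies. As the $M_n$ are finite-dimensional and hence locally compact, its third part makes $M$ Hausdorff and its fourth part guarantees that each compact subset of $M$ already lies in some $M_n$; these are precisely the topological facts that let me test smoothness level by level later on.

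The heart of the argument is the chart construction. First I would fix $x\in M$, write $x=\varepsilon_n(x_n)$, and apply the local normal form for injective immersions to choose inductively, for $k\geq n$, charts $\phi_k:U_k\to\mathbb{R}^{d_k}$ of $M_k$ around the image of $x_n$ such that each $\varepsilon_k^{k+1}$ reads in these coordinates as the standard inclusion $(t_1,\dots,t_{d_k})\mapsto(t_1,\dots,t_{d_k},0,\dots,0)$ and such that $U_k\subset U_{k+1}$ after identification. Passing to $U=\bigcup_{k\geq n}U_k$, which is open in the DL-topology by the second part of Proposition \ref{P_TopologicalPropertiesAscendingSequenceTopologicalSpaces}, the $\phi_k$ glue to a chart $\phi=\underrightarrow{\lim}\,\phi_k:U\to\underrightarrow{\lim}\,\mathbb{R}^{d_k}=\mathbb{R}^\infty$ (Example \ref{Ex_Rinfinity}). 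Any transition map between two such charts is an ascending union of the finite-dimensional transition maps, hence $C^\infty$ on an ascending union of open sets with values in the Mackey-complete space $\mathbb{R}^\infty$; the equivalence of $C^\infty$ and $c^\infty$ recalled above (Gl\"{o}ckner, Lemma 1.9) then makes it $c^\infty$, so these charts form a convenient atlas modelled on $\mathbb{R}^\infty$. In the same coordinates each $\varepsilon_n$ is a standard inclusion and is therefore $c^\infty$.

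It remains to check the universal property and uniqueness. Given any cone $(N,\theta_n)$ over $\mathcal{S}$ with $N$ convenient and each $\theta_n$ of class $c^\infty$, the set-theoretic universal property already yields a unique map $\psi:M\to N$ with $\psi\circ\varepsilon_n=\theta_n$; I would verify its smoothness in the charts above, where it reduces to smoothness of a map out of the convenient direct limit $\mathbb{R}^\infty=\underrightarrow{\lim}\,\mathbb{R}^{d_k}$, and this holds because $\psi\circ\varepsilon_k=\theta_k$ is $c^\infty$ for every $k$. Uniqueness of the structure is then formal: any two convenient structures on $M$ enjoying these properties make the identity of $M$ a $c^\infty$ map in both directions, by the universal property, and hence coincide.

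The step I expect to be the genuine obstacle is the inductive chart construction. The normal form for an injective immersion is purely local, so at each level I must shrink the chosen neighbourhood to remain inside the previous chart while keeping the coordinates compatible, and then argue both that the resulting ascending union $U$ is open in the DL-topology and that the glued coordinate map is a homeomorphism onto a $c^\infty$-open subset of $\mathbb{R}^\infty$. Carrying out these shrinkings coherently over the whole tower, with paracompactness of the $M_k$ used to keep them under control, is where the real work lies; once the adapted charts are in hand, the convenient atlas and the universal property follow comparatively routinely.
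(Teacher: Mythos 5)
For this theorem the paper offers no proof at all: it is stated as a direct consequence of Gl\"{o}ckner's results (\cite{Glo2}, Theorem 1 and Proposition 3.6), so what you have written is in effect a reconstruction of Gl\"{o}ckner's own argument rather than an alternative to anything in the paper. Your overall route --- DL-topology plus Proposition \ref{P_TopologicalPropertiesAscendingSequenceTopologicalSpaces} for the point-set facts, adapted ascending charts into $\underrightarrow{\lim}\,\mathbb{R}^{d_k}=\mathbb{R}^{\infty}$ (Example \ref{Ex_Rinfinity}), the $C^{\infty}$/$c^{\infty}$ equivalence for transition maps, and the universal property giving both the limit property and uniqueness --- is exactly the right one, and the universal-property and uniqueness parts of your argument are correct and essentially complete.

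The genuine gap is the one you yourself flag, and it is worth being precise about why your stated tool does not close it. The local normal form for an injective immersion $\varepsilon_k^{k+1}$ is a statement about a single point: it gives a chart of $M_{k+1}$ around $\varepsilon_k^{k+1}(p)$ in which only the image of some small neighbourhood of $p$ appears as $\mathbb{R}^{d_k}\times\{0\}$. To continue the induction you need a single chart of $M_{k+1}$ whose domain contains $\varepsilon_k^{k+1}(U_k)$ for the \emph{whole} previously chosen $U_k$, and in which the inclusion is globally the standard one; shrinking $U_k$ at stage $k+1$ is not an option, since that would have to be repeated infinitely often. The standard repair (and the one Gl\"{o}ckner uses) is to choose $U_k$ diffeomorphic to $\mathbb{R}^{d_k}$, note that $\varepsilon_k^{k+1}|_{U_k}$ is then an embedding onto a closed submanifold of an open subset of $M_{k+1}$, and invoke the tubular neighbourhood theorem --- this is precisely where paracompactness of the $M_k$ enters, via the existence of Riemannian metrics --- together with the triviality of the normal bundle over the contractible base $U_k$, to get $U_{k+1}\cong U_k\times\mathbb{R}^{d_{k+1}-d_k}\cong\mathbb{R}^{d_{k+1}}$ with the inclusion in standard form. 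With that step supplied, the union $U=\bigcup_{k\geq n}U_k$ is open and the glued map is a homeomorphism onto $\mathbb{R}^{\infty}$ by Proposition \ref{P_TopologicalPropertiesAscendingSequenceTopologicalSpaces}, and the rest of your argument goes through as written.
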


\begin{example}
\label{Ex_Sinfty}La esfera $\mathbb{S}^{\infty}$ (\cite{KriMic}, 47.2).-- El
espacio conveniente $\mathbb{R}^{\infty}$ es equipado con el producto escalar
d\'{e}bil dado por la suma finita $\left\langle x,y\right\rangle
=\sum\limits_{i}x_{i}y_{i}$ bilineal y acotada. El l\'{\i}mite inductivo de
$\mathbb{S}^{1}\subset\mathbb{S}^{2}\subset\cdots$ es el subconjunto cerrado
$\mathbb{S}^{\infty}=\left\{  x\in\mathbb{R}^{\infty}:\left\langle
x,x\right\rangle =1\right\}  $ de $\mathbb{R}^{\infty}$. Es una variedad
conveniente modelada sobre $\mathbb{R}^{\infty}$.
\end{example}

\subsection{Funciones sobre l\'{\i}mites directos de variedades}

Sea $\mathcal{M}=\left(  M_{i},\varepsilon_{i}^{j}\right)  _{i\in
\mathbb{N}^{\ast},\ j\in\mathbb{N}^{\ast},\ i\leq j}$ una sucesi\'{o}n directa
de variedades de clase $C^{\infty}$ donde $\varepsilon_{i}^{j}:M_{i}%
\longrightarrow M_{j}$ son inmersiones inyectivas $C^{\infty}$. Podemos
identificar $M_{i}$ con el subconjunto $\varepsilon_{i}^{j}\left(
M_{i}\right)  $ de $M_{j}$. El \'{a}lgebra de las funciones reales definidas sobre
$M_{i}$ se denotar\'{a} por $\mathcal{F}\left(  M_{i}\right)  $. Entonces, podemos definir la sucesi\'{o}n $\mathcal{F}=\left(  \mathcal{F}\left(
M_{i}\right)  ,\delta_{i}^{j}\right)  _{i\in\mathbb{N}^{\ast},\ j\in
\mathbb{N}^{\ast},\ i\leq j}$ con las aplicaciones
\[%
\begin{array}
[c]{cccc}%
\delta_{i}^{j}: & \mathcal{F}\left(  M_{j}\right)  & \longrightarrow &
\mathcal{F}\left(  M_{i}\right) \\
& f_{j} & \longmapsto & f_{i}%
\end{array}
\]
donde $\delta_{i}^{j}=\left(  \varepsilon_{i}^{j}\right)  ^{\ast}$, i.e.
$\delta_{i}^{j}\left(  f_{j}\right)  =f_{j}\circ\varepsilon_{i}^{j}$. Estas
aplicaciones satisfacen las condiciones $\delta_{i}^{j}\circ\delta_{j}%
^{k}=\delta_{i}^{k}$ para cada $i\leq j\leq k$. Entonces $\mathcal{F}$ es una
\textit{sucesi\'{o}n proyectiva} y se puede identificar el l\'{\i}mite
proyectivo $\underleftarrow{\lim}\mathcal{F}\left(  M_{i}\right)  $ con
$\bigcap\limits_{i=1}^{+\infty}\mathcal{F}\left(  M_{i}\right)  $.

Sea $f=\underleftarrow{\lim}f_{i}$ el l\'{\i}mite proyectivo de funciones
$f_{i}:M_{i}\longrightarrow\mathbb{R}$ de clase $C^{\infty}$. $f$ es $c^{\infty}$ (cf. \cite{Glo2}).\\
Para cada $x\in M$ se define la diferencial $df_{x}:T_{x}M\longrightarrow
\mathbb{R}$ de la manera siguiente: si $x\in M_{k}$, entonces $df\left(
x\right)  =df_{k}\left(  x_{k}\right)  $:$T_{x_{k}}M_{k}\longrightarrow
\mathbb{R}$.

\subsection{L\'{\i}mites directos de campos de vectores}

Sea $\mathcal{M}=\left(  M_{i},\varepsilon_{i}^{j}\right)  _{i\in
\mathbb{N}^{\ast},\ j\in\mathbb{N}^{\ast},\ i\leq j}$ una sucesi\'{o}n directa
de variedades paracompactas de clase $C^{\infty}$ donde $\varepsilon_{i}%
^{j}:M_{i}\longrightarrow M_{j}$ son inmersiones inyectivas $C^{\infty}$.
Entonces podemos equipar $\mathcal{T}=\left(  TM_{i},T\varepsilon_{i}%
^{j}\right)  _{i\in\mathbb{N}^{\ast},\ j\in\mathbb{N}^{\ast},\ i\leq j}$con
una estructura de fibrado vectorial conveniente (cf. \cite{SurCab}).

Sea $\left(  X_{i}\right)  _{i\in\mathbb{N}^{\ast}}$ una sucesi\'{o}n de
campos de vectores $X_{i}\in\mathfrak{X}\left(  M_{i}\right)  $ tal que:%
\[
T\varepsilon_{i}^{j}\circ X_{i}=X_{j}\circ\varepsilon_{i}^{j}%
\]

Podemos definir una secci\'{o}n $c^{\infty}$ de $\underrightarrow{\lim}TM_{i}$
como $\underrightarrow{\lim}X_{i}$.

\begin{example}
\label{Ex_VectorFieldOnRinfty}$X=\sum\limits_{i=1}^{+\infty}x_{i}%
\dfrac{\partial}{\partial x_{i}}$ es un campo de vectores de $\mathbb{R}%
^{\infty}$definido de la manera siguiente: sea $x\in\mathbb{R}^{\infty}$;
existe $n=n\left(  x\right)  \in\mathbb{N}^{\ast}$ tal que $x=\iota_{n}\left(
x_{1},\dots,x_{n}\right)  $ donde $\iota_{n}:\mathbb{R}^{n}\hookrightarrow
\mathbb{R}^{\infty}$ es la inyecci\'{o}n can\'{o}nica. Tenemos as\'{\i}
$X_{x}=\sum\limits_{i=1}^{n\left(  x\right)  }x_{i}\dfrac{\partial}{\partial
x_{i}}$.
\end{example}

\subsection{L\'{\i}mites directos de grupos de Lie}

Podemos encontrar en \cite{Glo2}, Teorema 4.3 (d), el resultado siguiente a
prop\'{o}sito del l\'{\i}mite directo de una sucesi\'{o}n de groupos de Lie de
dimension finita.

\begin{proposition}
\label{P_DirectLimitFiniteDimensionalLieGroup}Sea una sucesi\'{o}n de grupos
de Lie reales de dimension finita y de clase $C^{\infty}$ $\mathcal{G}=\left(
G_{i},\varepsilon_{i}^{j}\right)  _{i\in\mathbb{N}^{\ast},\ j\in
\mathbb{N}^{\ast},\ i\leq j}$ donde $\varepsilon_{i}^{j}:G_{i}\longrightarrow
G_{j}$ son $C^{\infty}$-homomorfismos. Entonces $G=\underrightarrow{\lim}%
G_{i}$ es un grupo de Lie $c^{\infty}$-regular.
\end{proposition}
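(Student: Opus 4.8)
El plan es construir la estructura de grupo de Lie $c^\infty$ sobre $G = \underrightarrow{\lim} G_i$ a partir de las estructuras ya disponibles, y verificar que resulta compatible con las operaciones de grupo. Primero aplicar\'ia el Teorema \ref{T_DirectLimitSequenceFiniteDimensionalParacompactManifolds}: dado que cada $G_i$ es una variedad paracompacta de dimensi\'on finita y cada $\varepsilon_i^j$ es un $C^\infty$-homomorfismo (en particular una inmersi\'on inyectiva, suponiendo inyectividad de los \textit{bonding maps}), existe una \'unica estructura de variedad $c^\infty$ modelada sobre $\mathbb{R}^\infty$ sobre $G$ para la cual $(G,\varepsilon_n) = \underrightarrow{\lim} G_i$ en la categor\'ia de variedades convenientes. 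Esto dota a $G$ de su variedad subyacente; falta comprobar que la multiplicaci\'on $m:G\times G\to G$ y la inversi\'on $\nu:G\to G$ son aplicaciones $c^\infty$.

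El paso clave es transportar las operaciones al l\'imite usando la propiedad universal. Como cada $\varepsilon_i^j$ es homomorfismo, las multiplicaciones $m_i:G_i\times G_i\to G_i$ conmutan con los \textit{bonding maps}, es decir $\varepsilon_i^j\circ m_i = m_j\circ(\varepsilon_i^j\times\varepsilon_i^j)$, de modo que $(G\times G,\varepsilon_n\times\varepsilon_n)$ realiza el l\'imite directo de la sucesi\'on $(G_i\times G_i)$ y la familia $(\varepsilon_n\circ m_n)_n$ es un cono compatible. Por la unicidad del l\'imite directo, existe una \'unica aplicaci\'on $c^\infty$ $m:G\times G\to G$ con $m\circ(\varepsilon_n\times\varepsilon_n)=\varepsilon_n\circ m_n$; esta es exactamente la multiplicaci\'on del grupo l\'imite. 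El mismo argumento, aplicado a las inversiones $\nu_i$, produce la inversi\'on $\nu$ como aplicaci\'on $c^\infty$. Las identidades de grupo (asociatividad, existencia de neutro, inverso) se heredan punto a punto, ya que todo elemento de $G$ proviene de alg\'un $G_n$ donde las identidades ya se cumplen.

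El obst\'aculo principal no es la estructura de grupo --- que se obtiene casi formalmente del Teorema \ref{T_DirectLimitSequenceFiniteDimensionalParacompactManifolds} y la propiedad universal --- sino la $c^\infty$-\emph{regularidad}. Esta exige que toda curva suave $X:\mathbb{R}\to\mathfrak{g}$ en el \'algebra de Lie $\mathfrak{g}=\underrightarrow{\lim}\mathfrak{g}_i$ admita una \'unica primitiva evolutiva $g:\mathbb{R}\to G$ resolviendo la ecuaci\'on diferencial logar\'itmica $g'(t)=g(t)\cdot X(t)$ con $g(0)=e$, y que la aplicaci\'on evoluci\'on sea $c^\infty$. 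Aqu\'i el argumento funciona invocando directamente el resultado de Gl\"ockner citado (\cite{Glo2}, Theorem 4.3 (d)): la compacidad de los subconjuntos compactos de $G$ los confina en alg\'un $G_n$ (Proposici\'on \ref{P_TopologicalPropertiesAscendingSequenceTopologicalSpaces}, parte 4), de modo que la imagen de cualquier curva suave $X$ restringida a un intervalo compacto cae en alg\'un $\mathfrak{g}_n$; all\'i la ecuaci\'on se resuelve por la teor\'ia cl\'asica de grupos de Lie de dimensi\'on finita, y las soluciones se pegan de forma coherente pasando al l\'imite. La dificultad t\'ecnica real reside en verificar que la evoluci\'on as\'i construida depende $c^\infty$-diferenciablemente del par\'ametro y de la curva $X$ sobre todo $G$, lo cual se sigue de la caracterizaci\'on de la suavidad sobre l\'imites directos ya establecida, evitando controlar uniformemente los \'indices $n$.
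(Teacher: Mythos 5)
El art\'{\i}culo no demuestra esta proposici\'{o}n: la enuncia expl\'{\i}citamente como un resultado tomado de \cite{Glo2}, Theorem 4.3 (d), de modo que no hay una demostraci\'{o}n propia con la cual comparar tu propuesta. Tu esbozo reconstruye, en l\'{\i}neas generales, el esquema correcto (estructura de variedad l\'{\i}mite, operaciones de grupo v\'{\i}a la propiedad universal, regularidad por confinamiento de compactos), y en ese sentido hace m\'{a}s que el texto; pero contiene lagunas concretas que conviene se\~{n}alar.

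Primera: la proposici\'{o}n s\'{o}lo supone que los $\varepsilon_{i}^{j}$ son $C^{\infty}$-homomorfismos, no que sean inyectivos. T\'{u} mismo adviertes que a\~{n}ades la inyectividad para poder invocar el Teorema \ref{T_DirectLimitSequenceFiniteDimensionalParacompactManifolds}, que exige inmersiones inyectivas; tal como est\'{a}, tu argumento s\'{o}lo cubre el caso estricto. Para el caso general hay que reducirse primero a una sucesi\'{o}n ascendente (por ejemplo sustituyendo cada $G_{i}$ por el grupo de Lie de dimensi\'{o}n finita $G_{i}/\ker\varepsilon_{i}$), y esa reducci\'{o}n es precisamente una de las partes no triviales del teorema de Gl\"{o}ckner. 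Segunda: el paso por la propiedad universal presupone que $\left(  G\times G,\varepsilon_{n}\times\varepsilon_{n}\right)$ realiza el l\'{\i}mite directo de $\left(  G_{i}\times G_{i}\right)$ en la categor\'{\i}a conveniente; esto es cierto para sucesiones numerables de variedades de dimensi\'{o}n finita (localmente compactas), pero los l\'{\i}mites directos no conmutan con productos en general, as\'{\i} que merece al menos una justificaci\'{o}n o una cita. Por \'{u}ltimo, en la parte de la regularidad identificas correctamente que los compactos caen en alg\'{u}n $G_{n}$ (Proposici\'{o}n \ref{P_TopologicalPropertiesAscendingSequenceTopologicalSpaces}, 4.) y que all\'{\i} se resuelve la ecuaci\'{o}n de evoluci\'{o}n, pero la suavidad de la aplicaci\'{o}n evoluci\'{o}n --- que es el contenido real de la $c^{\infty}$-regularidad --- queda simplemente remitida a \cite{Glo2}, exactamente como hace el propio art\'{\i}culo.
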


\begin{example}
\label{Ex_GLRinfinity}$GL\left(  \infty,\mathbb{R}\right)  =\varinjlim
GL\left(  \mathbb{R}^{n}\right)  $ es un grupo de Lie conveniente.
\end{example}

\subsection{L\'{\i}mites directos de fibrados vectoriales}

Denotamos la inyecci\'{o}n can\'{o}nica $\mathbb{R}^{i}\hookrightarrow
\mathbb{R}^{j}$ por $\iota_{i}^{j}$.

Sea $\mathcal{M}=\left(  M_{i},\varepsilon_{i}^{j}\right)  _{i\in
\mathbb{N}^{\ast},\ j\in\mathbb{N}^{\ast},\ i\leq j}$ una sucesi\'{o}n directa
de variedades paracompactas de clase $C^{\infty}$ y de dimension finita ($\dim
M_{i}=d_{i}$) donde $\varepsilon_{i}^{j}:M_{i}\longrightarrow M_{j}$ son
inmersiones inyectivas de clase $C^{\infty}$. Podemos suponer que $M_{1}\subset
M_{2}\subset\cdots$. Aqu\'{\i} consideramos la situaci\'{o}n donde $\sup
d_{i}=+\infty$.

Par cada n\'{u}mero natural $i$, sea $\left(  E_{i},\pi_{i},M_{i}\right)  $ un
fibrado vectorial cuya fibra es isomorfa al espacio vectorial $\mathbb{E}_{i}$
de dimensi\'{o}n $r_{i}$ (que se puede identificar a $\mathbb{R}^{r_{i}}$)
donde $\underset{i\in\mathbb{N}^{\ast}}{\sup}r_{i}=+\infty$. Supongamos que
$\mathcal{E}=\left(  E_{i},\lambda_{i}^{j}\right)  _{i\in\mathbb{N}^{\ast
},\ j\in\mathbb{N}^{\ast},\ i\leq j}$ es una sucesi\'{o}n directa de
variedades donde $\lambda_{i}^{j}:E_{i}\longrightarrow E_{j}$ son inmersiones
inyectivas de clase $C^{\infty}$ y morfismos de fibrados vectoriales. Supongamos tambi\'{e}n que $E_{1}\subset E_{2}\subset\dots$.

A la sucesi\'{o}n $\mathcal{E}=\left(  \left(  E_{i},\pi_{i},M_{i}\right)
_{i},\lambda_{i}^{j}\right)  _{i\in\mathbb{N}^{\ast},\ j\in\mathbb{N}^{\ast
},\ i\leq j}$ la llamaremos \textit{l\'{\i}mite directo de fibrados
vectoriales} si, para cada $x_{i}\in M_{i}$, existe un sistema directo de
trivializaciones $\left(  U_{i},\Psi_{i}\right)  $ (con $U_{1}\subset
U_{2}\subset\cdots$ y $U_{i}$ abierto de $M_{i}$) de $\left(  E_{i},\pi
_{i},M_{i}\right)  $ donde $\Psi_{i}:\pi_{i}^{-1}\left(  U_{i}\right)
\longrightarrow U_{i}\times\mathbb{R}^{r_{i}}$ son difeomorfismos locales
tales que $x_{i}\in U_{i}$ y para cada par $\left(  i,j\right)  \in
\mathbb{N}^{2},i\leq j$, tenemos las condiciones de compatibilidad:%
\[
\left(  \varepsilon_{i}^{j}\times\iota_{r_{i}}^{r_{j}}\right)  \circ\Psi
_{i}=\Psi_{j}\circ\lambda_{i}^{j}.
\]

La proposici\'{o}n siguiente generaliza el resultado de \cite{SurCab} a
prop\'{o}sito de l\'{\i}mite directo de fibrados tangentes.

\begin{proposition}
\label{P_StructureOnDirectLimitVectorBundles} Sea $\mathcal{E}=\left(  \left(
E_{i},\pi_{i},M_{i}\right)  _{i},\lambda_{i}^{j}\right)  _{i\in\mathbb{N}%
^{\ast},\ j\in\mathbb{N}^{\ast},\ i\leq j}$ una sucesi\'{o}n directa de
fibrados vectoriales. Entonces $\left(  \underrightarrow{\lim}E_{i}%
,\underrightarrow{\lim}\pi_{i},\underrightarrow{\lim}M_{i}\right)  $ puede ser
equipado con una estructura de fibrado vectorial conveniente cuya base es
modelada sobre $\mathbb{R}^{\infty}$ y cuyo grupo estructural es el grupo de
Lie conveniente $GL\left(  \infty,\mathbb{R}\right)  =\varinjlim GL\left(
\mathbb{R}^{n}\right)  $.
\end{proposition}

\begin{proof}
Definamos en primer lugar una estructura de variedad sobre $E=\underrightarrow
{\lim}E_{i}$. \newline

Para cada $i\in\mathbb{N}^{\ast},$ $E_{i}$ es un espacio topol\'{o}gico
localmente compacto; entonces, $\underrightarrow{\lim}E_{i}$ es Hausdorff (cf.
\cite{Han}).

Sea $v\in E;$ definamos une carta en $v$ como sigue. Puesto que $v$ pertenece
a $\underrightarrow{\lim}E_{i}$, existe $n\in\mathbb{N}^{\ast}$ tal que
$v=\lambda_{n}\left(  v_{n}\right)  $ con $v_{n}\in E_{n}$ y $\pi_{n}\left(
v_{n}\right)  =x\in M_{n}$. Para $i\geq n$, la trivializaci\'{o}n local
$\Psi_{i}:\pi_{i}^{-1}\left(  U_{i}\right)  \longrightarrow U_{i}%
\times\mathbb{R}^{r_{i}}$ da origen, via una carta $\varphi_{i}:U_{i}%
\longrightarrow\mathbb{R}^{d_{i}}$, a una carta $\psi_{i}:\pi_{i}^{-1}\left(
U_{i}\right)  \longrightarrow\mathbb{R}^{d_{i}}\times\mathbb{R}^{r_{i}}$.
Puesto que $\lambda_{i}^{j}$ es un morfismo sobre $\varepsilon_{i}^{j}$,
tenemos $\pi_{j}\circ\lambda_{i}^{j}=\varepsilon_{i}^{j}\circ\pi_{i}$ y
podemos definir $\pi=\underrightarrow{\lim}\pi_{i}:\underrightarrow{\lim}%
E_{i}\longrightarrow\underrightarrow{\lim}M_{i}$ por $\pi\left(  v\right)
=\pi_{n}\left(  v_{n}\right)  =x\in M_{n}\subset\underrightarrow{\lim}M_{i}$.
Tenemos entonces $\underrightarrow{\lim}\left(  \pi_{i}\right)  ^{-1}\left(
U_{i}\right)  =\pi^{-1}\left(  \underrightarrow{\lim}U_{i}\right)  $.\newline
Puesto que el diagrama siguiente es conmutativo%
\[%
\begin{array}
[c]{ccc}%
\left(  \pi_{i}\right)  ^{-1}\left(  U_{i}\right)   & \underrightarrow
{\lambda_{i}^{j}} & \left(  \pi_{j}\right)  ^{-1}\left(  U_{j}\right)  \\
\Psi_{i}\downarrow &  & \downarrow\Psi_{j}\\
U_{i}\times\mathbb{R}^{r_{i}} & \underrightarrow{\left(  \varepsilon_{i}%
^{j}\times\iota_{r_{i}}^{r_{j}}\right)  } & U_{j}\times\mathbb{R}^{r_{j}}%
\end{array}
\]
podemos definir una trivializaci\'{o}n local del fibrado $\left(
\underrightarrow{\lim}E_{i},\underrightarrow{\lim}\pi_{i},\underrightarrow
{\lim}M_{i}\right)  $
\[%
\begin{array}
[c]{cccc}%
\Psi: & \pi^{-1}\left(  \underrightarrow{\lim}U_{i}\right)   & \longrightarrow
& \underrightarrow{\lim}U_{i}\times\mathbb{R}^{\infty}\\
& v=\underrightarrow{\lim}v_{i} & \mapsto & \left(  x=\underrightarrow{\lim
}x_{i},\widehat{v}=\underrightarrow{\lim}\widehat{v_{i}}\right)
\end{array}
\]
donde $\widehat{v_{i}}=\theta_{\iota}\left(  v_{i}\right)  $ con
$\theta_{\iota}=\operatorname*{pr}\nolimits_{2}\circ\Psi_{i}$. \newline$\Psi$
es un homeomorfismo como l\'{\i}mite directo de homeomorfismos.\newline
Utilizando las cartas $\left(  U_{i},\varphi_{i}\right)  _{i\geq n}$ se puede
definir una carta de $E$ en $v$:%
\[%
\begin{array}
[c]{cccc}%
\psi: & \pi^{-1}\left(  \underrightarrow{\lim}U_{i}\right)   & \longrightarrow
& \mathbb{R}^{\infty}\times\mathbb{R}^{\infty}\\
& v=\underrightarrow{\lim}v_{i} & \mapsto & \left(  \overline{x}%
=\underrightarrow{\lim}\overline{x_{i}},\widehat{v}=\underrightarrow{\lim
}\widehat{v_{i}}\right)
\end{array}
\]
donde $\overline{x_{i}}=\varphi_{i}\left(  x_{i}\right)  $.\newline Adem\'{a}s
$\theta_{x|\pi^{-1}\left(  y\right)  }=\underrightarrow{\lim}\left(
\theta_{i}\right)  _{x_{i}|\pi_{i}^{-1}\left(  y_{i}\right)  }:\pi^{-1}\left(
y\right)  \longrightarrow\left\{  y\right\}  \times\mathbb{R}^{\infty}$ es
lineal.\newline Probemos ahora que los cambios de cartas son $c^{\infty}%
$-difeomorfismos. \newline Consideramos dos cartas $\left(  \pi^{-1}\left(
U^{\alpha}\right)  ,\psi^{\alpha}\right)  $ y $\left(  \pi^{-1}\left(
U^{\beta}\right)  ,\psi^{\beta}\right)  $ en $v\in E$. Tenemos entonces:%
\[
\tau^{\beta\alpha}=\psi^{\beta}\circ\left(  \psi^{\alpha}\right)  ^{-1}%
:\psi^{\alpha}\left(  \pi^{-1}\left(  U^{\alpha}\cap U^{\beta}\right)
\right)  \longrightarrow\psi^{\beta}\left(  \pi^{-1}\left(  U^{\alpha}\cap
U^{\beta}\right)  \right)
\]
donde $\tau^{\alpha\beta}\circ\left(  \iota_{d_{i}},\iota_{r_{i}}\right)
=\left(  \iota_{d_{i}},\iota_{r_{i}}\right)  \circ\tau_{i}^{\alpha\beta}$
($\iota_{k}:\mathbb{R}^{k}$ $\hookrightarrow\mathbb{R}^{\infty}$ es la
inyecci\'{o}n can\'{o}nica). Se sigue que $\tau^{\beta\alpha}=\left(
\varphi^{\beta}\circ\left(  \varphi^{\alpha}\right)  ^{-1},\theta^{\beta}%
\circ\left(  \theta^{\alpha}\right)  ^{-1}\right)  \in\operatorname{Diff}%
^{\infty}\left(  \mathbb{R}^{\infty}\right)  \times GL\left(  \mathbb{R}%
,\infty\right)  $ donde $Gl\left(  \mathbb{R},\infty\right)  $ es un grupo de
Lie conveniente (\cite{KriMic}, 47.7).
\newline
Probemos ahora la $c^{\infty}%
$-diferenciabilidad de la proyecci\'{o}n $\pi:E\longrightarrow M$. Sea
$c:\left]  -\varepsilon,\varepsilon\right[  \longrightarrow\pi^{-1}\left(
U\right)  \in E$ una curva tale que $c\left(  0\right)  =v$ donde $\pi\left(
v\right)  =x\in U\subset M$. Puesto que $c\left(  \left]  -\varepsilon
,\varepsilon\right[  \right)  $ es relativamente compacto, tenemos $c\left(
\left]  -\varepsilon,\varepsilon\right[  \right)  \subset\left(  \pi
_{n}\right)  ^{-1}\left(  U_{n}\right)  $
(\ref{P_TopologicalPropertiesAscendingSequenceTopologicalSpaces}, 4.).
Entonces $\pi\circ c=\pi_{n}\circ c_{n}\in C^{\infty}\left(  \left]
-\varepsilon,\varepsilon\right[  ,\mathbb{R}^{d_{n}}\right).$ Deducimos de
esto que $\pi$ es $c^{\infty}.$
\end{proof}

\section{\label{_AlgebroidesLie}Algebroides de Lie}

\bigskip Recordamos definiciones y propiedades utilizadas en \cite{CabPel}
(ver tambi\'{e}n \cite{Igl} y \cite{Mar2}).

\subsection{Definiciones. Expresi\'{o}n local. Ejemplos}

Sea $\tau:E\rightarrow M$ un fibrado vectorial sobre une variedad de
dimensi\'{o}n finita cuya fibra es un espacio vectorial $\mathbb{E}$ de
dimensi\'{o}n finita.

Un morfismo de fibrados vectoriales $\rho:E\rightarrow TM$ se llama un
\textit{ancla}. Este morfismo da origen a un $\mathcal{F}-$modulo
\underline{$\rho$}$:\Gamma\left(  E\right)  \rightarrow\Gamma\left(
TM\right)  $=$\mathfrak{X}(M)$ definido para cada $x$ $\in$ $M$ y cada
secci\'{o}n $s$ de $E$ por: $\left(  \underline{\rho}\left(  s\right)
\right)  \left(  x\right)  =\rho\left(  s\left(  x\right)  \right)  $ y
denotado tambi\'{e}n por $\rho$.

Llamaremos a $\left(  E,\tau,M,\rho\right)  $ un \textit{fibrado anclado}.

\begin{definition}
Un casi-corchete sobre un fibrado \textit{anclado} $\left(  E,\tau
,M,\rho\right)  $ es un corchete $\left[  .,.\right]  _{\rho}$ que satisface
una regla de Leibniz:%
\[
\forall f\in\mathcal{F},\forall s_{1},s_{2}\in\Gamma\left(  E\right)
,\quad\left[  s_{1},fs_{2}\right]  _{\rho}=f\left[  s_{1},s_{2}\right]
_{\rho}+\left(  \rho\left(  s_{1}\right)  \right)  \left(  f\right)  \ s_{2}%
\]
Un fibrado \textit{anclado} $\left(  E,\tau,M,\rho\right)  $ equipado con un
casi-corchete es llamado casi-algebroide de Lie.
\end{definition}

\begin{definition}
Un corchete de Lie sobre un fibrado \textit{anclado} $\left(  E,\tau
,M,\rho\right)  $ es un casi-corchete que verifica la identidad de Jacobi:%
\[
\forall s_{1},s_{2,}s_{3}\in\Gamma\left(  E\right)  ,\; \left[  s_{1},\left[  s_{2},s_{3}\right]  _{\rho
}\right]  _{\rho}+\left[  s_{2},\left[  s_{3},s_{1}\right]  _{\rho}\right]
_{\rho}+\left[  s_{3},\left[  s_{1},s_{2}\right]  _{\rho}\right]  _{\rho}=0
\]
Un fibrado \textit{anclado} $\left(  E,\pi,M,\rho\right)$ equipado con un
corchete de Lie se llama algebroide de Lie.
\end{definition}

Cuando $\left(  E,\tau,M,\rho\right)  $ es un algebroide de Lie el corchete
$\left[  .,.\right]  _{\rho}$ es un morfismo de \'{a}lgebras de Lie.

\begin{example}
\bigskip Cualquier \'{a}lgebra de Lie de dimensi\'{o}n finita es un algebroide
de Lie sobre un punto.
\end{example}

\begin{example}
El fibrado tangente $\pi_{M}:TM\longrightarrow M$ a una variedad diferencial
es un algebroide de Lie donde el ancla es $\operatorname{Id}_{TM}$ y el
corchete de secciones es el corchete de Lie de los campos de vectores.
\end{example}

\begin{example}
\label{Ex_LieAlgebroid_NijenhuisTensor}{$E=TM$ y $\rho=N$ es un tensor de
Nijenhuis, i.e. un tensor que verifica la condici\'{o}n%
\[
\left[  NX,NY\right]  =N\left(  \left[  NX,Y\right]  +\left[  X,NY\right]
-N\left(  \left[  X,Y\right]  \right)  \right)
\]
$\left(  TM,\pi,M,N\right)  $ es un algebroide de Lie (cf. \cite{Cab}) donde
el ancla es }$N$ y el corchete de secciones es el corchete{ $\left[  .,.\right]
_{N}$ definido por: \newline%
\[
\left[  X,Y\right]  _{N}=\left[  NX,Y\right]  +\left[  X,NY\right]  -N\left(
\left[  X,Y\right]  \right)
\]
}
\end{example}

\begin{example}
\label{Ex_LieAlgebroid_CtangentBundlePoissonTensor}Sea $\left(  M,\Lambda
\right)  $ una variedad de Poisson. El fibrado cotangente {$T^{\ast}M$ a una
variedad diferencial est\'{a} dotado de una estructura de algebroide de Lie
donde el ancla es el morfismo }$\Lambda^{\sharp}${ y el }corchete en las
1-formas{ (cf. \cite{MagMor}) viene dado por:
\[
\left[  \alpha,\beta\right]  _{\Lambda}=L_{\Lambda^{\sharp}\beta}\left(
\alpha\right)  -L_{\Lambda^{\sharp}\alpha}\left(  \beta\right)  +d\left\langle
\beta,\Lambda^{\sharp}\alpha\right\rangle
\]
}
\newline\newline para $\alpha,\beta\in\Omega^{1}\left(  M\right)  $.
\end{example}

\bigskip

Si fijamos un sistema de coordenadas locales $\left(  x^{i}\right)  _{1\leq
i\leq n}$ en la base $M$ y una base local $\left(  e_{\alpha}\right)
_{1\leq\alpha\leq m}$ de secciones de $\tau$, obtenemos un sistema de
coordonedas locales $\left(  x^{i},y^{\alpha}\right)  $ de $E$.

Tenemos una expresi\'{o}n local del ancla y del corchete:%
\[
\rho\left(  e_{\alpha}\right)  =\rho_{\alpha}^{i}\dfrac{\partial}{\partial
x^{i}}\text{ y }\left[  e_{\alpha},e_{\beta}\right]  _{\rho}=C_{\alpha\beta
}^{\gamma}e_{\gamma}%
\]

donde las funciones $\rho_{\alpha}^{i}$ y $C_{\alpha\beta}^{\gamma}$ verifican
relaciones debidas a la condici\'{o}n de compatibilidad y a la identidad de Jacobi.

\subsection{C\'{a}lculo en algebroides de Lie}

Dado un algebroide de Lie $\left(  E,\tau,M,\rho\right)  $ se definen la
derivada de Lie y la diferencial exterior.

\begin{description}
\item Derivada de Lie

Sea $s$ una secci\'{o}n de $E.$ Se define la \textit{derivada de Lie}
$L_{s}^{\rho}$

-- de una funci\'{o}n $f\in\Omega^{0}\left(  M,E\right)  =\mathcal{F}$ de
clase $C^{\infty}$ por:
\[
L_{s}^{\rho}\left(  f\right)  =L_{\rho\circ s}\left(  f\right)  =i_{\rho\circ
s}\left(  df\right)  \newline%
\]

-- de una $q$--forma $\omega\in\Omega^{q}\left(  M,E\right)  $ (donde $q>0$)
por%
\begin{align}
\left(  L_{s}^{\rho}\omega\right)  \left(  s_{1},\dots,s_{q}\right)   &
=L_{s}^{\rho}\left(  \omega\left(  s_{1},\dots,s_{q}\right)  \right)
\nonumber\\
&  -{\sum\limits_{i=1}^{q}}\omega\left(  s_{1},\dots,s_{i-1},\left[
s,s_{i}\right]  _{\rho},s_{i+1},\dots,s_{q}\right). \nonumber
\end{align}

\item Diferencial exterior

Se define la \textit{diferencial exterior} $d_{\rho}$

-- de una funci\'{o}n $f\in\mathcal{F}$ por%
\[
d_{\rho}f=t_{\rho}\circ df
\]

-- de una $q$--forma $\omega\in\Omega^{q}\left(  M,E\right)  $ (donde $q>0$)
por%
\begin{align*}
\left(  d_{\rho}\omega\right)  \left(  s_{0},\dots,s_{q}\right)   &
={\sum\limits_{i=0}^{q}}\left(  -1\right)  ^{i}L_{s_{i}}^{\rho}\left(
\omega\left(  s_{0},\dots,\widehat{s_{i}},\dots,s_{q}\right)  \right) \\
&  +{\sum\limits_{0\leq i<j\leq q}^{q}}\left(  -1\right)  ^{i+j}\left(
\omega\left(  \left[  s_{i},s_{j}\right]  _{\rho},s_{0},\dots,\widehat{s_{i}%
},\dots,\widehat{s_{j}},\dots,s_{q}\right)  \right).
\end{align*}

\end{description}

\bigskip

Si $\left(  x^{i}\right)  _{1\leq i\leq n}$ son coordenadas locales sobre $M$
y si $\left\{  e_{\alpha}\right\}  _{1\leq\alpha\leq m}$ es una base local de secciones de $\tau$ tenemos las expresiones locales siguientes:%
\[
dx^{i}=\rho_{\alpha}^{i}e^{\alpha}\text{ y }de^{\gamma}=-\dfrac{1}{2}%
C_{\alpha\beta}^{\gamma}e^{\alpha}\wedge e^{\beta}%
\]

donde $\left(  e^{\alpha}\right)  $ es la base dual de $\left(  e_{\alpha
}\right)  $.

Tenemos
\[
d_{\rho}\circ d_{\rho}=0.
\]

La cohomolog\'{\i}a asociada con $d_{\rho}$ se denomina la cohomolog\'{\i}a
del algebroide de Lie $\left(  E,\tau,M,\rho\right)  .$

\subsection{Morfismo de algebroides de Lie}

\begin{definition}
Un morfismo de fibrados vectoriales $\psi:E\rightarrow E^{\prime}$ sobre
$f:M\rightarrow M^{\prime}$ es un morfismo de algebroides de Lie $\left(
E,\tau,M,\rho\right)  $ y $\left(  E^{\prime},\tau^{\prime},M^{\prime}%
,\rho^{\prime}\right)  $ si $\psi^{\ast}:\Omega^{q}\left(  M,E^{\prime
}\right)  \rightarrow\Omega^{q}\left(  M,E\right)  $ definida por:
\[
\left(  \psi^{\ast}\alpha^{\prime}\right)  _{x}\left(  s_{1},\dots
,s_{q}\right)  =\alpha_{f\left(  x\right)  }^{\prime}\left(  \psi\circ
s_{1},\dots,\psi\circ s_{q}\right)
\]
verifica la relaci\'{o}n:%
\[
d_{\rho}\circ\psi^{\ast}=\psi^{\ast}\circ d_{\rho^{\prime}}%
\]

\end{definition}

\subsection{Prolongaciones de algebroides de Lie}

Vamos a recordar la definici\'{o}n de la estructura de algebroide de Lie sobre
la prolongaci\'{o}n de un algebroide de Lie mediante una fibraci\'{o}n
(v\'{e}ase \cite{LMM}, \cite{Mar2} y \cite{Nie}).

Sea $\left(  E,\tau,M,\rho\right)  $ un algebroide de Lie de rango $m$ sobre
una variedad $M$ de dimensi\'{o}n $n$ y sea $\nu:P\longrightarrow M$ una fibraci\'{o}n de rango
$q$, esto es, una submersi\'{o}n sobreyectiva.

Para cada punto $p\in P$, consideramos el conjunto
\[
\mathcal{T}_{p}^{E}P=\left\{  \left(  b,v\right)  \in E_{x}\times T_{p}%
P:\rho\left(  b\right)  =T_{p}\nu\left(  v\right)  \right\}
\]

donde $T\nu:TP\longrightarrow TM$ es la applicaci\'{o}n tangente a $\nu$ y
$\nu\left(  p\right)  =x\in M$.

El conjunto $\mathcal{T}^{E}P=\bigcup\limits_{p\in P}\mathcal{T}_{p}^{E}P$
tiene una estructura natural de fibrado vectorial cuya proyecci\'{o}n es
$\tau_{P}^{E}:\left(  b,v\right)  \mapsto\nu\left(  v\right)  $.

Denotaremos de manera redundante $\left(  b,v\right)  $ por $\left(
p,b,v\right)  $.

El fibrado vectorial $\tau_{P}^{E}:\mathcal{T}^{E}P\rightarrow P$ admite una
estructura de algebroide de Lie denominada \textit{prolongaci\'{o}n del
algebroide de Lie }$E$\textit{ mediante la fibraci\'{o}n }$\nu$ o
\textit{fibrado }$E$\textit{-tangente a }$P$.

El ancla $\rho_{P}$:$\mathcal{T}^{E}P\rightarrow TP$ es la proyecci\'{o}n
sobre el tercer factor, i.e. $\rho_{P}\left(  p,b,v\right)  =v$.

Con el fin de definir un corchete de secciones de $\tau_{P}^{E}$ vamos a
considerar secciones particulares.

Una secci\'{o}n $Z\in\Gamma\left(  \tau_{P}^{E}\right)  $ se dice que es
\textit{proyectable} si existe una secci\'{o}n $\sigma$ de $\tau
:E\longrightarrow M$ y un campo de vectores $U$ sobre $P$ proyectable mediante
al campo de vectores $\rho\left(  \sigma\right)  $ y tales que $Z\left(
p\right)  =\left(  p,\sigma\left(  \nu\left(  p\right)  \right)  ,U\left(
p\right)  \right)  $ para todo $p\in P$.

El corchete de dos secciones $Z_{1}$ y $Z_{2}$ dadas por $Z_{i}\left(
p\right)  =\left(  p,\sigma_{i}\left(  \nu\left(  p\right)  \right)
,U_{i}\left(  p\right)  \right)  $, $i=1,2$, es:%
\[
\left[  Z_{1},Z_{2}\right]  _{\rho_{P}}\left(  p\right)  =\left(  p,\left[
\sigma_{1},\sigma_{2}\right]  _{\rho}\left(  \nu\left(  p\right)  \right)
,\left[  U_{1},U_{2}\right]  \left(  p\right)  \right)
\]

para cada $p\in P$.

Es f\'{a}cil probar que se puede elegir una base local de secciones
proyectables del espacio $\Gamma\left(  \tau_{P}^{E}\right)  $.

Si $\left(  x^{i},u^{A}\right)  _{1\leq i\leq n,1\leq A\leq q}$ son
coordenadas locales sobre $P$ y si $\left\{  e_{\alpha}\right\}  _{1\leq
\alpha\leq m}$ es una base local de secciones de $\tau:E\longrightarrow M$
podemos definir una base local $\left\{  \mathcal{X}_{\alpha},\mathcal{V}%
_{A}\right\}  _{1\leq\alpha\leq m,1\leq A\leq q}$de secciones de $\tau_{P}%
^{E}$ dadas por:%
\[
\mathcal{X}_{\alpha}\left(  p\right)  =\left(  p,e_{\alpha}\left(  \nu\left(
p\right)  \right)  ,\rho_{\alpha}^{i}\left(  \dfrac{\partial}{\partial x^{i}%
}\right)  _{p}\right)  \qquad\text{y\qquad}\mathcal{V}_{A}\left(  p\right)
=\left(  p,0,\left(  \dfrac{\partial}{\partial u^{A}}\right)  _{p}\right)
\]

Si $z=\left(  p,b,v\right)  $ partenece a $\mathcal{T}^{E}P$ donde
$b=z^{\alpha}e_{\alpha}$, entonces $v$ es de la forma
\[
v=\rho_{\alpha}^{i}z^{\alpha}\dfrac{\partial}{\partial x^{i}}+v^{A}%
\dfrac{\partial}{\partial u^{A}}%
\]

y podemos escribir:%
\[
z=z^{a}\mathcal{X}_{\alpha}\left(  p\right)  +v^{A}\mathcal{V}_{A}\left(
p\right)  \text{.}%
\]

Para $Z\in\Gamma\left(  \tau_{P}^{E}\right)  $ dada localmente por
$Z=Z^{\alpha}\mathcal{X}_{\alpha}+V^{A}\mathcal{V}_{A}$ se tiene que%
\[
\rho_{P}\left(  Z\right)  =\rho_{\alpha}^{i}Z^{\alpha}\dfrac{\partial
}{\partial x^{i}}+V^{A}\dfrac{\partial}{\partial u^{A}}%
\]

\subsection{Prolongaci\'{o}n de morfismos de algebroides de Lie}

Sean $\nu:P\longrightarrow M$ y $\nu^{\prime}:P^{\prime}\longrightarrow
M^{\prime}$ dos fibraciones. Sea $\Psi:P\longrightarrow P^{\prime}$ una
aplicaci\'{o}n fibrada sobre $\varphi:M\longrightarrow M^{\prime}$.
Consideramos dos algebroides de Lie $\tau:E\longrightarrow M$ y $\tau^{\prime
}:E^{\prime}\longrightarrow M^{\prime}$ y una aplicaci\'{o}n $\Phi
:E\longrightarrow E^{\prime}$ fibrada sobre $\varphi$. Si $\Phi$ es admisible
podemos definir una aplicaci\'{o}n admisible $T^{\Phi}\Psi:\mathcal{T}%
^{E}P\longrightarrow\mathcal{T}^{E^{\prime}}P^{\prime}$ por%
\[
T^{\Phi}\Psi\left(  p,b,v\right)  =\left(  \Psi\left(  p\right)  ,\Phi\left(
b\right)  ,T\Psi\left(  v\right)  \right)  \text{.}%
\]

$\bigskip$Recordamos el resultado siguiente (v\'{e}ase \cite{Mar1}):

\begin{proposition}
\label{P_MorphismExtensionLieAlgebroids}$T^{\Phi}\Psi$ es un morfismo de
algebroides de Lie si y s\'{o}lo si $\Phi$ es un morfismo de algebroides de
Lie .
\end{proposition}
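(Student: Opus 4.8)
The plan is to work throughout with the characterization of a Lie algebroid morphism supplied by the preceding definition: a bundle map $\psi$ is a morphism exactly when $d\circ\psi^{\ast}=\psi^{\ast}\circ d$ for the associated $E$-exterior differentials. The argument turns on one canonical map attached to any prolongation, the projection
\[
\operatorname{pr}_{E}:\mathcal{T}^{E}P\longrightarrow E,\qquad(p,b,v)\longmapsto b,
\]
a vector bundle morphism over $\nu:P\rightarrow M$. First I would check that $\operatorname{pr}_{E}$ is itself a morphism of Lie algebroids. Since $\Omega^{\bullet}(M,E)$ is generated as a differential algebra by $\mathcal{F}$ and the basis $1$-forms $e^{\gamma}$, and $\operatorname{pr}_{E}^{\ast}$ is an algebra map while $d$ is a derivation, it suffices to verify $d_{\rho_{P}}\circ\operatorname{pr}_{E}^{\ast}=\operatorname{pr}_{E}^{\ast}\circ d_{\rho}$ on these generators. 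On functions this is precisely the defining relation $\rho(b)=T\nu(v)$ of $\mathcal{T}^{E}P$; on the $e^{\gamma}$ it follows from the prolongation bracket $[\mathcal{X}_{\alpha},\mathcal{X}_{\beta}]_{\rho_{P}}=C_{\alpha\beta}^{\gamma}\mathcal{X}_{\gamma}$, which in turn comes from the compatibility relation between $\rho$ and the structure functions $C$ of $E$. The same reasoning applies verbatim to $\operatorname{pr}_{E^{\prime}}:\mathcal{T}^{E^{\prime}}P^{\prime}\rightarrow E^{\prime}$.

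Next I would record two elementary facts. From the definition $T^{\Phi}\Psi(p,b,v)=(\Psi(p),\Phi(b),T\Psi(v))$ one reads off the identity
\[
\operatorname{pr}_{E^{\prime}}\circ T^{\Phi}\Psi=\Phi\circ\operatorname{pr}_{E},
\]
and also that $T^{\Phi}\Psi$ is automatically admissible: since $\rho_{P}$ and $\rho_{P^{\prime}}^{\prime}$ are projection onto the third factor, $\rho_{P^{\prime}}^{\prime}(T^{\Phi}\Psi(p,b,v))=T\Psi(v)=T\Psi(\rho_{P}(p,b,v))$, with no hypothesis on $\Phi$. Consequently the compatibility $d\circ(T^{\Phi}\Psi)^{\ast}=(T^{\Phi}\Psi)^{\ast}\circ d$ already holds on functions, and hence on every exact $1$-form, in particular on the differentials of the fibre coordinates of $P^{\prime}$. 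The whole content of the statement therefore sits in the $1$-forms pulled back from $E^{\prime}$.

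For those I would argue abstractly. Let $\beta\in\Omega^{\bullet}(M^{\prime},E^{\prime})$. Using the displayed identity together with the fact that $\operatorname{pr}_{E}$ and $\operatorname{pr}_{E^{\prime}}$ intertwine the differentials, the two sides of the morphism condition for $T^{\Phi}\Psi$ applied to $\operatorname{pr}_{E^{\prime}}^{\ast}\beta$ collapse to
\[
d_{\rho_{P}}(T^{\Phi}\Psi)^{\ast}\operatorname{pr}_{E^{\prime}}^{\ast}\beta=\operatorname{pr}_{E}^{\ast}\,d_{\rho}\,\Phi^{\ast}\beta,\qquad(T^{\Phi}\Psi)^{\ast}d_{\rho_{P^{\prime}}^{\prime}}\operatorname{pr}_{E^{\prime}}^{\ast}\beta=\operatorname{pr}_{E}^{\ast}\,\Phi^{\ast}\,d_{\rho^{\prime}}\beta.
\]
Hence the condition reads $\operatorname{pr}_{E}^{\ast}\big(d_{\rho}\Phi^{\ast}\beta-\Phi^{\ast}d_{\rho^{\prime}}\beta\big)=0$. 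Since the image of $\operatorname{pr}_{E^{\prime}}^{\ast}$ on $1$-forms, together with the automatic function and vertical cases, spans a generating set of the whole complex $\Omega^{\bullet}(\mathcal{T}^{E^{\prime}}P^{\prime})$, this shows that $T^{\Phi}\Psi$ is a morphism if and only if $d_{\rho}\Phi^{\ast}\beta=\Phi^{\ast}d_{\rho^{\prime}}\beta$ for $\beta$ ranging over the generators of $\Omega^{\bullet}(M^{\prime},E^{\prime})$, which is exactly the assertion that $\Phi$ is a morphism of Lie algebroids; both implications come out at once.

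I expect the main obstacle to be the injectivity of $\operatorname{pr}_{E}^{\ast}$ on forms, which is what licenses cancelling $\operatorname{pr}_{E}^{\ast}$ in the last step. This holds because $\nu$ is a submersion: $T_{p}\nu$ is onto, so for each $b\in E_{x}$ there is $v\in T_{p}P$ with $T\nu(v)=\rho(b)$, whence $\operatorname{pr}_{E}$ is fibrewise surjective; together with the surjectivity of $\nu$ this forces any $E$-form with vanishing pullback to vanish. The one other point demanding care is the clean reduction to generators at both ends, but this is routine once one notes that pullback is multiplicative and the differentials are derivations.
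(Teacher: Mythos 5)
The paper does not actually prove this proposition: it is recalled from Mart\'{\i}nez \cite{Mar1} with no argument given, so there is no in-paper proof to measure yours against. Judged on its own, your argument is correct and self-contained, and it has the merit of working exclusively with the characterization of a morphism that the paper adopts ($d_{\rho}\circ\psi^{\ast}=\psi^{\ast}\circ d_{\rho^{\prime}}$) rather than with brackets of projectable sections, which is the more pedestrian route one would expect the cited source to follow. The two pillars check out: $\operatorname{pr}_{E}\colon(p,b,v)\mapsto b$ is a Lie algebroid morphism over $\nu$ (anchor compatibility is literally the defining relation $\rho(b)=T_{p}\nu(v)$, and bracket compatibility is the special case $[\mathcal{X}_{\alpha},\mathcal{X}_{\beta}]_{\rho_{P}}=C_{\alpha\beta}^{\gamma}\mathcal{X}_{\gamma}$ of the formula for the bracket of projectable sections); $\operatorname{pr}_{E}^{\ast}$ is injective because $\nu$ is a surjective submersion, so $\operatorname{pr}_{E}$ is fibrewise onto; and the local coframe of $\mathcal{T}^{E^{\prime}}P^{\prime}$ is exactly $\{\operatorname{pr}_{E^{\prime}}^{\ast}e^{\prime\gamma}\}\cup\{d_{\rho_{P^{\prime}}^{\prime}}u^{\prime A}\}$, with the second block disposed of by $d\circ d=0$ once the function case is settled. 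Combined with $\operatorname{pr}_{E^{\prime}}\circ T^{\Phi}\Psi=\Phi\circ\operatorname{pr}_{E}$, both implications do fall out simultaneously.

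Two points deserve to be stated more carefully. First, admissibility of $\Phi$ is not quite ``free'': it is needed not for the anchor compatibility of $T^{\Phi}\Psi$ (which, as you say, costs nothing) but for $T^{\Phi}\Psi$ to take values in $\mathcal{T}^{E^{\prime}}P^{\prime}$ at all, since $(\Psi(p),\Phi(b),T\Psi(v))$ lies there precisely when $\rho^{\prime}(\Phi(b))=T\varphi(\rho(b))$; the paper assumes this, so nothing is lost, but your phrasing suggests otherwise. Second, the reduction to generators at both ends should be justified by the remark that $d\circ\psi^{\ast}$ and $\psi^{\ast}\circ d$ are both derivations along the algebra homomorphism $\psi^{\ast}$, hence coincide as soon as they coincide on functions and on a local generating family of $1$-forms; you allude to this, and it is indeed routine, but it is the hinge on which the whole computation turns.
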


\section{\label{_LimitesDirectosProlongacionesAlgebroidesLie}L\'{\i}mites
directos de prolongaciones de algebroides de Lie}

\begin{definition}
Se llama a $\left(  E_{i},\tau_{i},M_{i},\rho_{i}\right)
_{i\in\mathbb{N}^{\ast}}$ sucesi\'{o}n directa de algebroides de Lie si
\newline1. $\left(  \left(  E_{i},\lambda_{i}^{j}\right)  \right)
_{i\in\mathbb{N}^{\ast},\ j\in\mathbb{N}^{\ast},\ i\leq j}$ es una
sucesi\'{o}n directa de fibrados vectoriales de dimensiones finitas ($\tau
_{i}:E_{i}\rightarrow M_{i}$) sobre la sucesi\'{o}n directa de variedades de
dimensiones finitas $\left(  \left(  M_{i},\varepsilon_{i}^{j}\right)
\right)  _{i\in\mathbb{N}^{\ast},\ j\in\mathbb{N}^{\ast},\ i\leq j}$\newline2.
Para cada $i,j\in\mathbb{N}^{\ast}$ tales que $i\leq j$, tenemos%
\[
\rho_{j}\circ\lambda_{i}^{j}=T\varepsilon_{i}^{j}\circ\rho_{i}%
\]
\newline3. $\lambda_{i}^{j}:E_{i}\rightarrow E_{j}$ es un morfismo de los
algebroides de Lie $\left(  E_{i},\tau_{i},M_{i},\rho_{i}\right)  $ y
$\left(  E_{j},\tau_{j},M_{j},\rho_{j}\right)  .$
\end{definition}

Tenemos el resultado suiguiente:

\begin{theorem}
\label{T_ConvenienLieAlgebroid}Sea $\left(  E_{i},\tau_{i},M_{i},\rho
_{i}\right)  _{i\in\mathbb{N}^{\ast}}$ una sucesi\'{o}n directa de algebroides de
Lie. \newline Entonces $\left(  \underrightarrow{\lim}E_{i},\underrightarrow
{\lim}\tau_{i},\underrightarrow{\lim}M_{i},\underrightarrow{\lim}\rho
_{i}\right)  $ es un algebroide de Lie conveniente.
\end{theorem}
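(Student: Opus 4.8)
The plan is to build the convenient Lie algebroid structure one ingredient at a time, transporting each piece through the direct limit. First I would invoke Proposition~\ref{P_StructureOnDirectLimitVectorBundles} applied to the direct sequence of vector bundles $\left(E_i,\tau_i,M_i\right)$ underlying the given Lie algebroids; this immediately equips $\left(\varinjlim E_i,\varinjlim\tau_i,\varinjlim M_i\right)$ with a convenient vector bundle structure, with base $M=\varinjlim M_i$ modeled on $\mathbb{R}^\infty$ and structure group $GL\left(\infty,\mathbb{R}\right)$. This settles the underlying fibered object, so only the anchor and the bracket remain to be constructed and shown to satisfy the axioms.

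Next I would produce the anchor. Condition~2 of the definition, namely $\rho_j\circ\lambda_i^j=T\varepsilon_i^j\circ\rho_i$, says precisely that $\left(\rho_i\right)$ is a morphism from the direct system $\left(E_i,\lambda_i^j\right)$ to the direct system $\left(TM_i,T\varepsilon_i^j\right)$. Since $\varinjlim TM_i=T\varinjlim M_i=TM$ (the tangent-bundle case treated earlier), the universal property of the direct limit yields a unique map $\rho=\varinjlim\rho_i\colon E\longrightarrow TM$. That $\rho$ is a $c^\infty$ vector bundle morphism follows because each $\rho_i$ is fiberwise linear and a bundle morphism, and a direct limit of such morphisms is again one; the induced $\mathcal{F}$-module map $\underline{\rho}\colon\Gamma\left(E\right)\to\mathfrak{X}\left(M\right)$ is then defined level by level.

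The decisive step is the bracket. Following the pattern used for projectable sections in the prolongation construction, I would define the bracket on sections of the form $s=\varinjlim s_i$, $t=\varinjlim t_i$ arising from \emph{compatible} sequences (those with $\lambda_i^j\circ s_i=s_j\circ\varepsilon_i^j$), by setting $[s,t]_\rho:=\varinjlim[s_i,t_i]_{\rho_i}$, and extending to arbitrary $c^\infty$ sections through the Leibniz rule. The crux, and the step I expect to be the main obstacle, is \emph{well-definedness}: one must check that $\left([s_i,t_i]_{\rho_i}\right)_i$ is again a compatible sequence, i.e. $\lambda_i^j\circ[s_i,t_i]_{\rho_i}=[s_j,t_j]_{\rho_j}\circ\varepsilon_i^j$. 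This is exactly the statement that a Lie algebroid morphism relates the bracket of two pairs of related sections, and it is the content of hypothesis~3 once the pullback characterization of Lie algebroid morphisms given in the definition is reformulated in its equivalent \emph{related sections} form. I would therefore first record this equivalence, then read it off for each bonding map $\lambda_i^j$.

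Finally I would verify the algebroid axioms by passing them through the limit. Bilinearity and antisymmetry of $[\cdot,\cdot]_\rho$ are inherited termwise from each $[\cdot,\cdot]_{\rho_i}$. For the Leibniz rule, given $f=\varprojlim f_i\in\mathcal{F}\left(M\right)=\varprojlim\mathcal{F}\left(M_i\right)$, the identity $[s,ft]_\rho=f[s,t]_\rho+\left(\rho(s)\right)(f)\,t$ holds at each finite level, using the differential $df$ defined earlier on $M$, and therefore in the limit; likewise the Jacobi identity $J\left(s,t,u\right)=0$ holds because it holds for $\left(s_i,t_i,u_i\right)$ for every $i$. Collecting the convenient vector bundle, the $c^\infty$ anchor, and this Lie bracket establishes that $\left(\varinjlim E_i,\varinjlim\tau_i,\varinjlim M_i,\varinjlim\rho_i\right)$ is a convenient Lie algebroid.
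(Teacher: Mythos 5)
Your proposal follows essentially the same route as the paper: the convenient vector bundle structure from Proposition~\ref{P_StructureOnDirectLimitVectorBundles}, the anchor from the compatibility condition $\rho_j\circ\lambda_i^j=T\varepsilon_i^j\circ\rho_i$, the bracket defined on compatible sequences of sections with well-definedness reduced to $\lambda_i^j\circ[s_i,t_i]_{\rho_i}=[s_j,t_j]_{\rho_j}\circ\varepsilon_i^j$, and the Leibniz and Jacobi identities passed levelwise to the limit. The ``related sections'' reformulation of the pullback characterization of Lie algebroid morphisms that you propose to record is exactly the computation the paper carries out explicitly by applying $d_{\rho_i}\circ\left(\lambda_i^j\right)^{\ast}=\left(\lambda_i^j\right)^{\ast}\circ d_{\rho_j}$ to arbitrary $\alpha_j\in\Omega^{1}\left(M_j,E_j\right)$.
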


\begin{proof}

1. $\left(  \underrightarrow{\lim}E_{i},\underrightarrow{\lim}\tau
_{i},\underrightarrow{\lim}M_{i}\right)  $ es un fibrado vectorial conveniente
sobre la variedad conveniente $\underrightarrow{\lim}M_{i}$ modelada sobre
$\mathbb{R}^{\infty}$ (cf. Proposici\'{o}n
\ref{P_StructureOnDirectLimitVectorBundles}).

2. Sean $\left(  s_{i}^{1}\right)  _{i\in\mathbb{N}^{\ast}}$ y $\left(
s_{i}^{2}\right)  _{i\in\mathbb{N}^{\ast}}$ directas sucesiones de secciones de fibrados vectoriales $\pi_{i}:E_{i}\rightarrow M_{i}$. Entonces se cumplen las condiciones
\begin{equation}
\left\{
\begin{array}
[c]{c}%
\lambda_{i}^{j}\circ s_{i}^{1}=s_{j}^{1}\circ\varepsilon_{i}^{j}\\
\lambda_{i}^{j}\circ s_{i}^{2}=s_{j}^{2}\circ\varepsilon_{i}^{j}%
\end{array}
\right.  \label{_Comp_Sect}%
\end{equation}

Tenemos que probar la compatibilidad de los corchetes:
\begin{equation}
\lambda_{i}^{j}\circ\left[  s_{i}^{1},s_{i}^{2}\right]  _{E_{i}}=\left[
s_{j}^{1},s_{j}^{2}\right]  _{E_{j}}\circ\varepsilon_{i}^{j}
\label{_Comp_SectBrackets}%
\end{equation}

y de las propiedas de Leibniz:
\begin{equation}
\lambda_{i}^{j}\circ\left[  s_{i}^{1},g_{i}\times s_{i}^{2}\right]  _{E_{i}%
}=\left[  s_{j}^{1},g_{j}\times s_{j}^{2}\right]  _{E_{j}}\circ\varepsilon
_{i}^{j} \label{_Comp_SectLeibniz}%
\end{equation}

a) Con el fin de probar la compatibilidad de los corchetes utilizamos los morfismos
$\lambda_{i}^{j}:E_{i}\longrightarrow E_{j}$ de algebroides de Lie sobre
$\varepsilon_{i}^{j}:M_{i}\longrightarrow M_{j}$ que satisfacen:
\begin{equation}
d_{\rho_{i}}\circ\left(  \lambda_{i}^{j}\right)  ^{\ast}=\left(  \lambda
_{i}^{j}\right)  ^{\ast}\circ d_{\rho_{j}} \label{_MorphismLieAlgebroids}%
\end{equation}

y por tanto, aplicados a $\alpha_{j}\in\Omega^{1}\left(  M_{j},E_{j}\right):$

\[
\left(  d_{\rho_{i}}\circ\left(  \lambda_{i}^{j}\right)  ^{\ast
}\left(  \alpha_{j}\right)  \right)  \left(  s_{i}^{1},s_{i}^{2}\right)
=\left(  \left(  \lambda_{i}^{j}\right)  ^{\ast}\circ d_{\rho_{j}}\left(
\alpha_{j}\right)  \right)  \left(  s_{i}^{1},s_{i}^{2}\right).
\]
\newline\newline El primer miembro es igual a
\begin{align*}
&  \left(  d_{\rho_{i}}\circ\left(  \lambda_{i}^{j}\right)  ^{\ast}\left(
\alpha_{j}\right)  \right)  \left(  s_{i}^{1},s_{i}^{2}\right) \\
&  =L_{\rho_{i}\circ s_{i}^{1}}\left(  \left(  \left(  \lambda_{i}^{j}\right)
^{\ast}\left(  \alpha_{j}\right)  \right)  \left(  s_{i}^{2}\right)  \right)
-L_{\rho_{i}\circ s_{i}^{2}}\left(  \left(  \left(  \lambda_{i}^{j}\right)
^{\ast}\left(  \alpha_{j}\right)  \right)  \left(  s_{i}^{1}\right)  \right)
-\left(  \left(  \lambda_{i}^{j}\right)  ^{\ast}\left(  \alpha_{j}\right)
\right)  \left[  s_{i}^{1},s_{i}^{2}\right]  _{E_{i}}\\
&  =X_{j}^{1}\left(  \alpha_{j}\left(  \lambda_{i}^{j}\circ s_{i}^{2}\right)
\right)  -X_{j}^{2}\left(  \alpha_{j}\left(  \lambda_{i}^{j}\circ s_{j}%
^{1}\right)  \right)  -\alpha_{j}\left(  \lambda_{i}^{j}\circ\left[  s_{i}%
^{1},s_{i}^{2}\right]  _{E_{i}}\right)
\end{align*}

donde $X_{j}^{a}=\rho_{j}\circ s_{j}^{a}$ con $a=1,2$ cumplen la relaci\'{o}n:
$X_{j}^{a}\left(  f_{j}\right)  =X_{i}^{a}\left(  f_{i}\right)$ donde
$f_{j}=\alpha_{j}\circ s_{j}$.

El segundo miembro es igual a
\begin{align*}
&  \left(  \left(  \lambda_{i}^{j}\right)  ^{\ast}\left(  d_{\rho_{j}}\left(
\alpha_{j}\right)  \right)  \right)  \left(  s_{i}^{1},s_{i}^{2}\right) \\
&  =d_{\rho_{j}}\left(  \alpha_{j}\right)  \left(  \lambda_{i}^{j}\circ
s_{i}^{1},\lambda_{i}^{j}\circ s_{i}^{2}\right) \\
&  =L_{\rho_{j}\circ\lambda_{i}^{j}\circ s_{i}^{1}}\left(  \alpha_{j}\left(
\lambda_{i}^{j}\circ s_{i}^{2}\right)  \right)  -L_{\rho_{j}\circ\lambda
_{i}^{j}\circ s_{i}^{2}}\left(  \alpha_{j}\left(  \lambda_{i}^{j}\circ
s_{i}^{1}\right)  \right)  -\alpha_{j}\left[  \lambda_{i}^{j}\circ s_{i}%
^{1},\lambda_{i}^{j}\circ s_{i}^{2}\right]  _{E_{j}}\\
&  =L_{\rho_{j}\circ s_{j}^{1}}\left(  \alpha_{j}\left(  \lambda_{i}^{j}\circ
s_{i}^{2}\right)  \right)  -L_{\rho_{j}\circ s_{j}^{2}}\left(  \alpha
_{j}\left(  \lambda_{i}^{j}\circ s_{i}^{1}\right)  \right)  -\alpha_{j}\left[
\lambda_{i}^{j}\circ s_{i}^{1},\lambda_{i}^{j}\circ s_{i}^{2}\right]  _{E_{j}%
}\\
&  =X_{j}^{1}\left(  \alpha_{j}\left(  \lambda_{i}^{j}\circ s_{i}^{2}\right)
\right)  -X_{j}^{2}\left(  \alpha_{j}\left(  \lambda_{i}^{j}\circ s_{j}%
^{1}\right)  \right)  -\alpha_{j}\left[  \lambda_{i}^{j}\circ s_{i}%
^{1},\lambda_{i}^{j}\circ s_{i}^{2}\right]  _{E_{j}}%
\end{align*}

En particular, para cada $\alpha_{j}\in\Omega^{1}\left(  M_{j},E_{j}\right)$,
\newline$\alpha_{j}\left(  \lambda_{i}^{j}\left(  \left[  s_{i}^{1},s_{i}%
^{2}\right]  _{E_{i}}\right)  \right)  =\alpha_{j}\left[  \lambda_{i}^{j}\circ
s_{i}^{1},\lambda_{i}^{j}\circ s_{i}^{2}\right]  _{E_{j}}$ y por lo tanto:\\
$\lambda_{i}^{j}\circ\left[  s_{i}^{1},s_{i}^{2}\right]  _{E_{i}}=\left[
\lambda_{i}^{j}\circ s_{i}^{1},\lambda_{i}^{j}\circ s_{i}^{2}\right]  _{E_{j}%
}$.\newline Utilizando $\lambda_{i}^{j}\circ s_{i}^{a}=s_{j}^{a}\circ
\varepsilon_{i}^{j}$, tenemos: $\lambda_{i}^{j}\circ\left[  s_{i}^{1}%
,s_{i}^{2}\right]  _{E_{i}}=\left[  s_{j}^{1},s_{j}^{2}\right]  _{E_{j}}%
\circ\varepsilon_{i}^{j}$.

b) Con el fin de probar  (\ref{_Comp_SectLeibniz}) vamos a establecer
\[
\lambda_{i}^{j}\circ\left(  g_{i}\times\left[  s_{i}^{1},s_{i}^{2}\right]
_{E_{i}}+\left(  \rho_{i}\left(  s_{i}^{1}\right)  \right)  \left(
g_{i}\right)  \times s_{i}^{2}\right)  =\left(  g_{j}\times\left[  s_{j}%
^{1},s_{j}^{2}\right]  _{E_{j}}+\left(  \rho_{j}\left(  s_{j}^{1}\right)
\right)  \left(  g_{j}\right)  \times s_{j}^{2}\right)  \circ\varepsilon
_{i}^{j}%
\]
porque, para $k \in \{i,j\}$, tenemos:
\[
\left[  s_{k}^{1},g_k \times s_{k}^{2}\right]
_{E_{k}}=
g_{k}\times\left[  s_{k}^{1},s_{k}^{2}\right]
_{E_{k}}+\left(  \rho_{k}\left(  s_{k}^{1}\right)  \right)  \left(
g_{k}\right)  \times s_{k}^{2}
\]

Podemos escribir:
\begin{align*}
&  \lambda_{i}^{j}\circ\left(  g_{i}\times\left[  s_{i}^{1},s_{i}^{2}\right]
_{E_{i}}+\left(  \rho_{i}\left(  s_{i}^{1}\right)  \right)  \left(
g_{i}\right)  \times s_{i}^{2}\right) \\
&  =\lambda_{i}^{j}\circ\left(  g_{i}\times\left[  s_{i}^{1},s_{i}^{2}\right]
_{E_{i}}\right)  +\lambda_{i}^{j}\circ\left(  \left(  \rho_{i}\left(
s_{i}^{1}\right)  \right)  \left(  g_{i}\right)  \times s_{i}^{2}\right) \\
&  =g_{i}\times\left(  \lambda_{i}^{j}\circ\left[  s_{i}^{1},s_{i}^{2}\right]
_{E_{i}}\right)  +\lambda_{i}^{j}\left(  X_{i}^{1}\left(  g_{i}\right)
\right)  \times\lambda_{i}^{j}\circ s_{i}^{2}\quad\text{(}\lambda_{i}%
^{j}\text{ es un morfismo)}\\
&  =g_{i}\times\left(  \left[  s_{j}^{1},s_{j}^{2}\right]  _{E_{j}}%
\circ\varepsilon_{i}^{j}\right)  +X_{j}^{1}\left(  g_{j}\right)
\circ\varepsilon_{i}^{j}\times s_{j}^{2}\circ\varepsilon_{i}^{j}\quad\text{cf.
(\ref{_Comp_SectBrackets})}\\
&  =\left(  g_{j}\circ\varepsilon_{i}^{j}\right)  \times\left(  \left[
s_{j}^{1},s_{j}^{2}\right]  _{E_{j}}\circ\varepsilon_{i}^{j}\right)  +\left(
X_{j}^{1}\left(  g_{j}\right)  \times s_{j}^{2}\right)  \circ\varepsilon
_{i}^{j}\\
&  =\left(  g_{j}\times\left[  s_{j}^{1},s_{j}^{2}\right]  _{E_{j}}\right)
\circ\varepsilon_{i}^{j}+\left(  \rho_{j}\left(  s_{j}^{1}\right)  \left(
g_{j}\right)  \times s_{j}^{2}\right)  \circ\varepsilon_{i}^{j}\\
&  =\left(  g_{j}\times\left[  s_{j}^{1},s_{j}^{2}\right]  _{E_{j}}+\left(
\rho_{j}\left(  s_{j}^{1}\right)  \right)  \left(  g_{j}\right)  \times
s_{j}^{2}\right)  \circ\varepsilon_{i}^{j}\text{.}%
\end{align*}
\qquad

3. Con la construcci\'{o}n de $\underrightarrow{\lim}[\;,\;]_{i}$,
 si $\rho_{i}$ es un morfismo de algebroides $\left(
E_{i},\pi_{i},M_{i},\rho_{i},[\;,\;]_{i}\right) \to
(TM_{i},M_{i},[\;,\;])$, entonces tenemos
\[
\underrightarrow{\lim}\rho_{i}(\underrightarrow{\lim}[\;,\;]_{i}%
)=[\underrightarrow{\lim}\rho_{i}(.),\underrightarrow{\lim}\rho_{i}(.)].
\]
Es f\'{a}cil probar que si cada corchete $[\;,\;]_{i}$ satisface la identidad de Jacobi, el l\'{\i}mite $\underrightarrow{\lim}[\;,\;]_{i}$ satisface tambi\'{e}n la identidad de Jacobi.
\end{proof}

Sea $\left(  E_{i},\tau_{i},M_{i},\rho_{i}\right)  _{i\in\mathbb{N}^{\ast}}$
una sucesi\'{o}n directa de algebroides de Lie donde $\left(  \left(
E_{i},\lambda_{i}^{j}\right)  \right)  _{i\in\mathbb{N}^{\ast},\ j\in
\mathbb{N}^{\ast},\ i\leq j}$ es la sucesi\'{o}n directa de fibrados
associada. Sea $\left(  P_{i},\nu_{i},M_{i}\right)  _{i\in\mathbb{N}^{\ast}}$
una sucesi\'{o}n directa de fibrados vectoriales y $\theta_{i}^{j}%
:P_{i}\longrightarrow P_{j}$ una aplication fibrada sobre $\varepsilon_{i}%
^{j}:M_{i}\longrightarrow M_{j}$. Puesto que $\lambda_{i}^{j}$ es un morfismo
entre los algebroides de Lie $\left(  E_{i},\tau_{i},M_{i},\rho_{i}\right)  $ y
$\left(  E_{j},\tau_{j},M_{j},\rho_{j}\right)$, entonces $T^{\lambda_{i}^{j}%
}\theta_{i}^{j}:\mathcal{T}^{E_{i}}P_{i}\longrightarrow\mathcal{T}^{E_{j}%
}P_{j}$ es un morfismo de prolongaciones de algebroides de Lie (cf.
Proposici\'{o}n \ref{P_MorphismExtensionLieAlgebroids}). Con la condici\'{o}n de
compatibilidad
\[
\rho_{P_{j}}\circ T^{\lambda_{i}^{j}}\theta_{i}^{j}=T\theta_{\iota}^{j}%
\circ\rho_{P_{i}}%
\]

$\left(  T^{E_{i}}P_{i},\tau_{P_{i}}^{E_{i}},P_{i},\rho_{P_{i}}\right)
_{i\in\mathbb{N}^{\ast}}$ es una sucesi\'{o}n directa de algebroides de Lie.

\bigskip
Utilizando el Teorema \ref{T_ConvenienLieAlgebroid} obtenemos el resultado siguiente:

\begin{theorem}
\label{T_ConvenientLieAlgebroidExtension} $\left(  \underrightarrow{\lim
}T^{E_{i}}P_{i},\underrightarrow{\lim}\tau_{P_{i}}^{E_{i}},\underrightarrow
{\lim}P_{i},\underrightarrow{\lim}\rho_{P_{i}}\right)  $ es un algebroide de
Lie conveniente.
\end{theorem}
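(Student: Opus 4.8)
El plan es reducir el enunciado al Teorema \ref{T_ConvenienLieAlgebroid}. Seg\'{u}n la construcci\'{o}n que precede a este enunciado, disponemos ya de los datos que definen una sucesi\'{o}n directa de algebroides de Lie asociada a las prolongaciones; basta pues comprobar que efectivamente se cumplen las tres condiciones de la definici\'{o}n de sucesi\'{o}n directa de algebroides de Lie y aplicar dicho teorema a la sucesi\'{o}n $(\mathcal{T}^{E_i}P_i, \tau^{E_i}_{P_i}, P_i, \rho_{P_i})_{i\in\mathbb{N}^{\ast}}$.

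Primero establecer\'{\i}a la estructura de sucesi\'{o}n directa de fibrados vectoriales de dimensi\'{o}n finita. Cada prolongaci\'{o}n $\mathcal{T}^{E_i}P_i \rightarrow P_i$ es un fibrado vectorial de rango finito $m_i+q_i$ (rango de $E_i$ m\'{a}s rango de $\nu_i$), y las bases $(P_i,\theta_i^j)$ forman una sucesi\'{o}n directa de variedades de dimensi\'{o}n finita. Las aplicaciones de enlace $T^{\lambda_i^j}\theta_i^j(p,b,v)=(\theta_i^j(p),\lambda_i^j(b),T\theta_i^j(v))$ son inmersiones inyectivas --- por serlo $\lambda_i^j$ y $\theta_i^j$, y ser $T\theta_i^j$ fibra a fibra inyectiva --- y morfismos de fibrados vectoriales sobre $\theta_i^j$. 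Usando las bases locales adaptadas $\{\mathcal{X}_\alpha,\mathcal{V}_A\}$ se verifica la compatibilidad de las trivializaciones locales, esto es, $(\theta_i^j\times\iota)\circ\Psi_i=\Psi_j\circ T^{\lambda_i^j}\theta_i^j$, lo que confiere a $(\mathcal{T}^{E_i}P_i,T^{\lambda_i^j}\theta_i^j)$ la estructura de sucesi\'{o}n directa de fibrados vectoriales requerida.

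A continuaci\'{o}n verificar\'{\i}a las dos condiciones restantes. La compatibilidad de las anclas, $\rho_{P_j}\circ T^{\lambda_i^j}\theta_i^j=T\theta_i^j\circ\rho_{P_i}$, se obtiene por c\'{a}lculo directo, pues ambos miembros env\'{\i}an $(p,b,v)$ a $T\theta_i^j(v)$; \'{e}sta es precisamente la relaci\'{o}n ya se\~{n}alada en el p\'{a}rrafo precedente. Que cada $T^{\lambda_i^j}\theta_i^j$ sea un morfismo de algebroides de Lie es consecuencia inmediata de la Proposici\'{o}n \ref{P_MorphismExtensionLieAlgebroids}, ya que $\lambda_i^j$ lo es por hip\'{o}tesis. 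Cumplidas as\'{\i} las tres condiciones, el Teorema \ref{T_ConvenienLieAlgebroid} proporciona directamente la estructura de algebroide de Lie conveniente sobre $(\underrightarrow{\lim}\mathcal{T}^{E_i}P_i,\underrightarrow{\lim}\tau_{P_i}^{E_i},\underrightarrow{\lim}P_i,\underrightarrow{\lim}\rho_{P_i})$.

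La principal dificultad reside en el segundo paso: comprobar que las trivializaciones locales de las prolongaciones pueden elegirse de forma compatible con las aplicaciones de enlace. Esto exige organizar las bases adaptadas $\{\mathcal{X}_\alpha^i,\mathcal{V}_A^i\}$ de modo que $T^{\lambda_i^j}\theta_i^j$ respete el desdoblamiento en la parte proveniente de $E_i$ y la parte vertical de la fibraci\'{o}n, coherentemente con las inclusiones $\iota_{r_i}^{r_j}$; el resto del argumento es rutinario una vez dispuesta esta estructura.
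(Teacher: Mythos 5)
Tu propuesta es correcta y sigue esencialmente el mismo camino que el art\'{\i}culo: comprobar que $\left( \mathcal{T}^{E_i}P_i,\tau^{E_i}_{P_i},P_i,\rho_{P_i}\right)_{i\in\mathbb{N}^{\ast}}$ es una sucesi\'{o}n directa de algebroides de Lie --- usando la Proposici\'{o}n \ref{P_MorphismExtensionLieAlgebroids} para que $T^{\lambda_i^j}\theta_i^j$ sea morfismo de algebroides y la compatibilidad de las anclas $\rho_{P_j}\circ T^{\lambda_i^j}\theta_i^j=T\theta_i^j\circ\rho_{P_i}$ --- y aplicar luego el Teorema \ref{T_ConvenienLieAlgebroid}. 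De hecho, detallas m\'{a}s que el texto original la verificaci\'{o}n de la condici\'{o}n 1 (trivializaciones compatibles mediante las bases $\left\{\mathcal{X}_{\alpha},\mathcal{V}_{A}\right\}$), que el art\'{\i}culo da por supuesta.
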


\begin{example}
El oscilador arm\'{o}nico conveniente.-- Consideramos el caso del oscilador
arm\'{o}nico que es un sistema bihamiltoniano. Consideramos aqu\'{\i} el
l\'{\i}mite directo de prolongaciones de algebroides de Lie $T^{E_{n}}%
E_{n}^{\ast}$ donde $E_{n}=T\mathbb{R}^{n}$ y el ancla es el tensor de
Nijenhuis
\[
N_{n}=\left(
\begin{array}
[c]{ccccc}%
\dfrac{\left(  x^{1}\right)  ^{2}+\left(  y^{1}\right)  ^{2}}{2} & 0 &  &  &
\\
0 & \dfrac{\left(  x^{1}\right)  ^{2}+\left(  y^{1}\right)  ^{2}}{2} &  &  &
\\
&  & \ddots &  & \\
&  &  & \dfrac{\left(  x^{n}\right)  ^{2}+\left(  y^{n}\right)  ^{2}}{2} & 0\\
&  &  & 0 & \dfrac{\left(  x^{n}\right)  ^{2}+\left(  y^{n}\right)  ^{2}}{2}%
\end{array}
\right)  .
\]
Tenemos los corchetes siguientes: $\left[  \dfrac{\partial}{\partial x^{k}%
},\dfrac{\partial}{\partial y^{k}}\right]  _{N_{n}}=-y^{k}\dfrac{\partial
}{\partial x^{k}}+x^{k}\dfrac{\partial}{\partial y^{k}}$. \newline Si $\left(
x_{\ }^{i},\mu_{\alpha}\right)  _{1\leq i\leq n,1\leq\alpha\leq n}$ son
coordenadas sobre $T^{\ast}\mathbb{R}^{n}$ consideramos el hamiltoniano,
definido para $\left(  x_{\ }^{i},\mu_{\alpha}\right)  \neq\left(  0,0\right)
$, por%

\[
H_{n}:\left(  x_{\ }^{i},\mu_{\alpha}\right)  \mapsto\prod\limits_{i=1}^{n}%
\ln\left(  \left(  x^{i}\right)  ^{2}+\left(  \mu_{\alpha}\right)
^{2}\right)  .
\]
\newline Tenemos une sucesi\'{o}n proyectiva de funciones.\newline Obtenemos
las ecuaciones de Hamilton sobre cada $T^{\ast}\mathbb{R}^{n}$(cf. \cite{Mar2}
):%
\[
\left\{
\begin{array}
[c]{c}%
\dfrac{dx^{i}}{dt}=\rho_{\alpha}^{i}\dfrac{\partial H_{n}}{\partial\mu
_{\alpha}}\\
\dfrac{d\mu_{\alpha}}{dt}=-\rho_{\alpha}^{i}\dfrac{\partial H_{n}}{\partial
x^{i}}-\mu_{\gamma}C_{\alpha\beta}^{\gamma}\dfrac{\partial H_{n}}{\partial
\mu_{\beta}}%
\end{array}
\right.
\]
\newline\newline Estas ecuaciones se pueden simplificar de forma que:%
\[
\left\{
\begin{array}
[c]{c}%
\dfrac{dx^{i}}{dt}=\mu^{i}\\
\dfrac{d\mu^{\alpha}}{dt}=-x^{\alpha}%
\end{array}
\right.  \text{.}%
\]

\end{example}


\begin{thebibliography}{999999}                                                                                           


\bibitem[Bou]{Bou}N. Bourbaki, \textit{El\'{e}ments de Math\'{e}matiques},
Alg\`{e}bre, Chapitres 1 \`{a} 3, 2$^{\grave{e}me}$ \'{e}dition, Springer 2006

\bibitem[Cab]{Cab}P. Cabau, \textit{Strong projective limit of Banach Lie
algebroids}, Portugal. Math. (N.S.) Vol. 69, Fasc. 1, 2012, 1--21

\bibitem[CabPel]{CabPel}P. Cabau, F. Pelletier, \textit{Almost Lie structures
on an anchored Banach bundle}, Journal of Geometry and Physics 62 (2012) 2147--2169

\bibitem[Glo1]{Glo1}H. Gl\"{o}ckner, \textit{Direct limit of Lie groups and
manifolds}, J. Math. Kyoto Univ. (JMKYAZ) 43-1 (2003) 1--26

\bibitem[Glo2]{Glo2}H. Gl\"{o}ckner, \textit{Fundamentals of Direct Limit Lie
Theory}, Compositio Math. 141 (2005) 1551--1577

\bibitem[Glo3]{Glo3}H. Gl\"{o}ckner, \textit{Direct limits of
infinite-dimensional Lie groups compared to direct limits in related
categories}, Journal of Functional Analysis 245 (2007) 19--61

\bibitem[Han]{Han}V.L. Hansen, \textit{Some Theorems on Direct Limit of
Expanding Sequences of Manifolds}, Math.\ Scand.29 (1971) 5--36

\bibitem[HigMac]{HigMac}P.J. Higgins, K.C.H. Mackenzie, \textit{Algebraic
constructions in the category of Lie algebroids}, J. Algebra, 129 (1990) 194--230

\bibitem[Igl]{Igl}D. Iglesias Ponte, \textit{Variedades de Poisson, grupoids y
algebroides de Lie,} Actas del XI Congreso Dr. Antonio A. R. Monteiro (2011) 35--59

\bibitem[KriMic]{KriMic}A. Kriegel, P.W. Michor, \textit{The convenient
Setting of Global Analysis }(AMS Mathematical Surveys and Monographs)
\textbf{53} 1997

\bibitem[Lan]{Lan}S. Lang, \textit{Differential and Riemannian Manifolds},
Graduate Texts in Mathematics, 160, Springer, New York 1995

\bibitem[LMM]{LMM}M. de Le\'{o}n, J.C. Marrero, E. Mart\'{\i}nez,
\textit{Lagrangian submanifolds and dynamics on Lie algebroids}, J. Phys. A:
Math. Gen., 38 (2005), R241-R308.

\bibitem[MagMor]{MagMor}F. Magri, C. Morosi,\textit{\ A geometrical
characterization of integrable hamiltonian systems through the theory of
Poisson-Nijenhuis manifolds, }Quaderno S 19, Universit\`{a} degli studi di
Milano, 1984

\bibitem[Mar1]{Mar1}E. Mart\'{\i}nez, \textit{Classical field theory on Lie
algebroids: multisymplectic formalism}, math.DG/0411352

\bibitem[Mar2]{Mar2}E. Mart\'{\i}nez, \textit{Lie algebroids in Classical
Mechanics and Optimal Control}, Symmetry, Integrability and Geometry: Methods
and Applications SIGMA 3 (2007), 050

\bibitem[Nie]{Nie}D. de las Nieves Sosa Mart\'{\i}n, \textit{Afgebroides de
Lie y Mec\'{a}nica Geom\'{e}trica}, Tesis, Universidad de La Laguna, 2008

\bibitem[Pra]{Pra}J. Pradines, \textit{Th\'{e}orie de Lie pour les
groupo\"{\i}des diff\'{e}rentiables ; relations entre propri\'{e}t\'{e}s
locales et globales}, C.R. Acad. Sci. Paris 263 (1966) 907-- 910.

\bibitem[SurCab]{SurCab}A. Suri, P. Cabau, \textit{Geometric structure for the
tangent bundle of direct limit manifolds},  Differential Geometry - Dynamical Systems, Vol.16 (2014) 239—247

\bibitem[Wei]{Wei}A. Weinstein, \textit{Lagrangian Mechanics and groupoids},
Fields Inst. Comm., 7 (1996), 207--231
\end{thebibliography}
\end{document}